\newtheorem{theorem}{Theorem}
\newtheorem{definition}{Definition PE.}
\title{\emph{RKHS} embedding for estimating nonlinear piezoelectric systems}
\author{
  Sai Tej Paruchuri, Jia Guo, Andrew Kurdila 
    \\
  Department of Mechanical Engineering\\
  Virginia Tech\\
  Blacksburg, VA 24060 \\
  \texttt{saitejp@vt.edu, jguo18@vt.edu, kurdila@vt.edu} \\
}
\begin{document}
\maketitle

\begin{abstract}
Nonlinearities in piezoelectric systems can arise from internal factors such as nonlinear constitutive laws or external factors like realizations of boundary conditions. It can be difficult or even impossible to derive detailed models from the first principles of all the sources of nonlinearity in a system. As a specific example, in traditional modeling techniques that use electric enthalpy density with higher-order terms, it can be problematic to choose which polynomial nonlinearities are essential. This paper introduces adaptive estimator techniques to estimate the nonlinearities that can arise in certain piezoelectric systems. Here an underlying assumption is that the nonlinearities can be modeled as functions in a reproducing kernel Hilbert space (RKHS). Unlike traditional modeling approaches, the approach discussed in this paper allows the development of models without knowledge of the precise form or structure of the nonlinearity. This approach can be viewed as a data-driven method to approximate the unknown nonlinear system. This paper introduces the theory behind the adaptive estimator and studies the effectiveness of this approach numerically for a class of nonlinear piezoelectric composite beams.
\end{abstract}

\keywords{Reproducing Kernels \and RKHS \and Piezoelectric Oscillators \and Nonlinear Oscillator \and Data-driven modeling}

\section{Introduction}

Researchers have studied piezoelectric systems extensively over the past three decades for applications to a number of classical problems of vibration attenuation, as well as modern problems like energy harvesting. Even though many of these studies model piezoelectric oscillators as linear systems, piezoelectric systems are often inherently nonlinear. At low input amplitudes, the effect of nonlinearity is ordinarily not very pronounced. However, linear models can fail to capture the dynamics of piezoelectric systems that undergo large displacements, velocities, accelerations, or electric field strengths \cite{Kurdila2016,Tiersten1969,Erturk2011}. Researchers have consequently also developed nonlinear models for many examples of piezoelectric oscillators. Conventionally, the derivation of nonlinear models starts with the inclusion of higher-order (polynomial) terms in the electric enthalpy density \cite{Maugin1986, Yang2006, VONWAGNER2002, VonWagner2003, Wagner2003, VonWagner2004, Stanton2010, Stanton2012}. Using the extended Hamilton's principle or Lagrange's equations then gives a corresponding set of nonlinear equations of motion. The choice of which higher-order terms should be included in the electric enthalpy density largely depends on the amount and type of nonlinearity in the system at hand, and these factors are determined from experiments.

Whether the system is linear or nonlinear, system identification methods are used to estimate unknown dynamics. Over the years, a large inventory of methods have been derived for linear methods that estimate or identify the dynamics of systems in vibrations. See \cite{Ewins1984,Banks1996} for a popular account of the estimation techniques in this field. Not too surprisingly, well-conceived and popular identification methods based on assumptions of linearity can yield questionable results if the underlying system has substantial nonlinearities. We focus primarily here on just one example and the reader is referred to \cite{Ljung2001,Schoukens2019} for a more comprehensive account of linear and nonlinear methodologies. It is important to note that often experimental data for vibrating systems is collected or processed in the frequency domain. Such experiments are not always amenable to the representation of system nonlinearity. To see this, consider the harmonic balancing method, which is used traditionally to study the steady-state response of nonlinear systems \cite{Worden2019}. This method can be viewed as an analytical equivalent of stepped sine testing methods that are popular in nonlinear experiments. From first principles the higher-order harmonic terms arising from system nonlinearity are neglected while implementing the harmonic balance method. Hence, nonlinear models constructed from this method based on frequency domain experiments can fail to capture the entire dynamics of the system. Furthermore, nonlinearity in vibrating systems can arise from external factors like improper or uncertain clamping that imposes boundary conditions. The inclusion of higher-order terms in the electric enthalpy density of a piezoelectric system will not in general capture the nonlinearities resulting from such external factors.

In view of these observations regarding difficulties modeling nonlinear systems, over the past few years, there has been a keen interest in deriving representations of systems using data-driven modeling. While data-driven modeling methods can be understood as a type or class of system identification technique, these methods have emerged as a distinctive discipline over the past few years. These techniques rely on developing models from time or frequency domain experimental data. For vibrating systems where linear models are sufficient, robust techniques like vector fitting can be used to get state-space models from frequency domain experimental data \cite{Gustavsen1999, Malladi2018}. However, data-driven modeling approaches specifically developed for nonlinear systems are an area of increasing interest and are yet to be fully developed. One example of such a technique is the Dynamic Mode Decomposition (DMD) method, which approximates Koopman modes to model the inherent dynamics \cite{Kutz2016, Tu2014, Bravo2017, Bravo2017a, Bravo2019}. See \cite{Kurdila2018} for a discussion of rates of convergence for many of these methods. However, dynamic mode decomposition produces a linear (perhaps infinite-dimensional) model for nonlinear systems, and the accuracy of these techniques remains an open and active research topic.

In this paper, we introduce a novel data-driven approach for modeling nonlinear systems, one that we apply to model nonlinear piezoelectric systems. This approach is based on embedding the unknown nonlinear function appearing in the governing equation in a reproducing kernel Hilbert space (RKHS). The unknown function is subsequently estimated through adaptive parameter estimation. Identification methods that use RKHS have been studied for problems like terrain measurement \cite{Bobade2019}, control of dynamical systems \cite{Joshi2018, Axelrod2015, Chowdhary2013}, sensor selection \cite{Maske2018}, and learning spatiotemporally-evolving systems \cite{Whitman2017, Kingravi2015}. In this paper, we extend the methodology initially developed for autonomous systems in \cite{Bobade2019}, and apply it to the nonlinear piezoelectric systems, a nonautonomous system. The advantages are as follows:
\begin{enumerate}
    \item Under some conditions, this technique provides a bound on the error between the actual and estimated unknown function.
    \item There is a geometric interpretation of the error estimate, in terms of the positive limit set of the system equations, that describes the subset over which convergence is guaranteed. This is a newly observed property of the RKHS embedding method.
    \item This technique not only gives us a nonlinear model but also estimates the underlying nonlinear function over a subspace of the state space. Techniques like DMD generate approximations of an observable but do not usually estimate the underlying system nonlinearity.
    \item Since the primary assumption is that the nonlinear function belongs to an RKHS, this technique can be implemented for a large class of nonlinearities.
    \item Unlike conventional modeling techniques, the source of nonlinearity does not influence the estimation approach. The exact form of the nonlinearity is unknown.
\end{enumerate}


In this study, we take as a prototypical example of a piezoelectric system, a piezoelectric composite beam subject to base excitation, and we model its dynamics using an adaptive estimation technique based on the RKHS embedding method.

\section{Nonlinear Piezoelectric Model}
\label{sec_model}
In this section, we derive the equations of motion for the target class of nonlinear piezoelectric composites. This section carefully describes the precise nature of some constitutive nonlinearities and the limitations of the traditional linear models. In the current study, we have chosen the electric enthalpy density for nonlinear continua given in \cite{VONWAGNER2002} to serve as the means to construct the governing equations and formulate the RKHS embedding approach. Note that the RKHS embedding techniques discussed in this paper are not limited to this problem and can be adapted to model other types of similar nonlinear electromechanical composite oscillators.
\subsection{Nonlinear Electric Enthalpy Density}
The expression for electric enthalpy density for modeling linear piezoelectric continua is given by
\begin{align*}
    \mathcal{H} = \frac{1}{2} C_{ijkl}^E S_{ij} S_{kl} - e_{mij} S_{ij} E_m - \frac{1}{2} \epsilon_{im}^S E_i E_m,
\end{align*}
where $C_{ijkl}^E$, $S_{ij}$, $e_{mij}$, $E_m$, and $\epsilon_{im}^S$ are the Young's modulus, strain, piezoelectric coupling, electric field, and permittivity tensors, respectively. The quadratic form above is written using the summation convention. Based on thermodynamic considerations, the stress and electric displacement, $T_{ij}$ and $D_i$, respectively, are defined in the relations
\begin{align*}
    T_{ij} = \left. \frac{\partial \mathcal{H}}{\partial S_{ij}} \right|_{s,E},  \hspace{0.5in}
    -D_i = \left. \frac{\partial \mathcal{H}}{\partial E_i} \right|_{s,S}.
\end{align*}
The associated constitutive laws of linear piezoelectricity have the form
\begin{align*}
    \begin{Bmatrix} T_{ij} \\ D_m \end{Bmatrix}
    =
    \begin{bmatrix} C^E_{ijkl} & -e_{nij} \\ e_{mkl} & \epsilon^S_{mn} \end{bmatrix}
    \begin{Bmatrix}S_{kl} \\ E_n  \end{Bmatrix} ,
\end{align*}
where again the summation convention holds in the expression above. In the above equations, the superscripts on $C^E_{ijkl}$ and $\epsilon^S_{mn}$ emphasize that these constants are measured when the electric field and strain are held constant. For piezoelectric beam bending models, consideration is restricted to constitutive laws that have the form
\begin{align*}
    \begin{Bmatrix} T_{x} \\ D_z \end{Bmatrix}
    =
    \begin{bmatrix} C^E_{xx} & -e_{zx} \\ e_{zx} & \epsilon^S_{zz} \end{bmatrix}
    \begin{Bmatrix}S_{x} \\ E_z  \end{Bmatrix} ,
\end{align*}
where $x \sim 11$, $z \sim 3$ are the coordinate directions depicted in Figure \ref{fig_pbeam}. The coordinate $x$ is measured along the neutral axis that extends along the length of the beams, and $z$ is in the transverse bending displacement direction. The permittivity at constant strain can be related to that at constant stress using the relation
\begin{align*}
    \epsilon^S_{zz} = \epsilon^T_{zz} - d_{zx}^2 C^E_{xx}.
\end{align*}
The piezoelectric strain coefficient $d_{zx}$ is related to the piezoelectric coupling constant $e_{zx}$ by the equation $e_{zx} = C^E_{xx}d_{zx}$. The constitutive laws for the piezoelectric composite are
\begin{align*}
    T_{x} &= C^E_{xx} S_{x} - d_{zx} C^E_{xx} E_z, \\
    D_z &= d_{zx} C^E_{xx} S_{x} + (\epsilon^T_{zz} - d_{zx}^2 C^E_{xx}) E_z.
\end{align*}
A detailed discussion of this linear case can be found in \cite{Kurdila2016,Tiersten1969,Leo2007b}.

For large values of the field variables, the effects of nonlinearity in the piezoelectric continua can become dominant. We account for these effects by adding higher order terms in the expression for the electric enthalpy density. The nonlinear dependence between $C^E_{xx}$, $d_{zx}$ and $S_x$ can be approximated using the relations \cite{VONWAGNER2002}
\begin{align*}
    C^E_{xx} &= C^{E(0)}_{xx} + C^{E(1)}_{xx} S_{x} + C^{E(2)}_{xx} S_{x}^2, \\
    d_{zx} &= d_{zx}^{(0)} + d_{zx}^{(1)}S_{x} + d_{zx}^{(2)} S_{x}^2.
\end{align*}

The corresponding electric enthalpy density has the form
\begin{align}
    \mathcal{H} = \frac{1}{2} C^{E(0)}_{xx} S_x^2 + \frac{1}{3} C^{E(1)}_{xx} S_x^3 + \frac{1}{4} C^{E(2)}_{xx} S_x^4 - \gamma_0 S_x E - \frac{1}{2} \gamma_1 S_x^2 E - \frac{1}{2} \nu_0 E^2
    \label{eq_eed}
\end{align}
with
\begin{align*}
    \nu_0 &= \epsilon^T - (d_{zx}^{(0)})^2 C^{E(0)}_{xx}, & \gamma_0 = C^{E(0)}_{xx} d_{zx}^{(0)}, \\
    \gamma_1 &= C^{E(0)}_{xx} d_{zx}^{(1)} + C^{E(1)}_{xx} d_{zx}^{(0)}, & \gamma_2 = C^{E(0)}_{xx} d_{zx}^{(2)} + C^{E(2)}_{xx} d_{zx}^{(0)} + C^{E(1)}_{xx} d_{zx}^{(1)}.
\end{align*}

Thus, the nonlinear constitutive laws, obtained using the relations shown above, have the form
\begin{align*}
    T_x &= C^{E(0)}_{xx} S_x + C^{E(1)}_{xx} S_x^2 + C^{E(2)}_{xx} S_x^3 - \gamma_0 E_z - \gamma_1 S_x E_z - \gamma_2 S_x^2 E_z, \\
    D_z &= \gamma_0 S_x + \frac{1}{2} \gamma_1 S_x^2 + \frac{1}{3} \gamma_2 S_x^3 + \nu_0 E_z.
\end{align*}

See references \cite{Stanton2010,Stanton2012,Cottone2012,Friswell2012,Rakotondrabe2009} for other similar models that are used to represent the behavior of nonlinear piezoelectric systems.

\subsection{Equations of Motion}
\label{sec_ham_prin}

\begin{figure}[htb!]
\centering
\includegraphics[scale = 0.55]{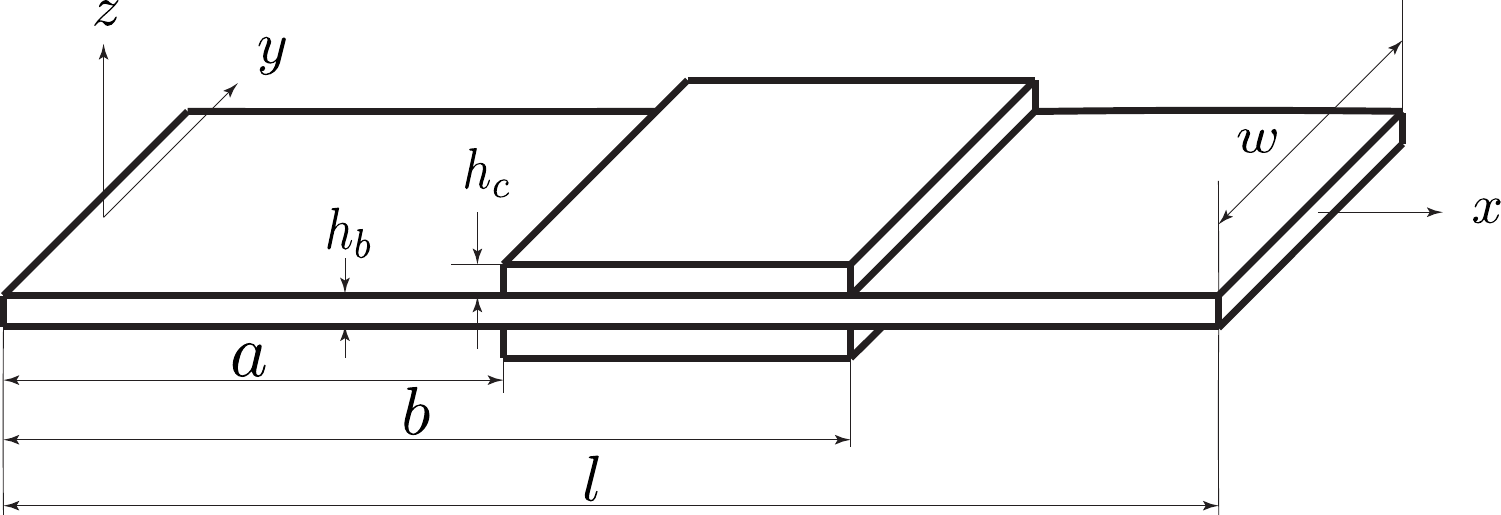}
\caption{Function 2D plot.}
\label{fig_pbeam}
\end{figure}

In this subsection, we derive the nonlinear equations of motion of the typical piezoelectric composite, the cantilevered bimorph, shown in Figure \ref{fig_pbeam}. The extended Hamilton's Principle states that of all the possible trajectories in the electromechanical configuration space, the actual motion satisfies the variational identity

\begin{equation}
\delta \int_{t_0}^{t_1} \left (  T - \mathcal{V}_{\mathcal{H}}  \right ) dt + \int_{t_0}^{t_1} \delta W dt = 0 \label{eq_ham_prin}
\end{equation}

with kinetic energy $T$, electromechanical potential $\mathcal{V}_\mathcal{H}$ defined below, electromechanical virtual work $\delta W$, initial time $t_0$, and final time $t_1$. The kinetic energy of the nonlinear piezoelectric beam is expressed as 
\begin{align}
    T = \frac{1}{2} \int_0^l m(x) (\dot{w} + \dot{\mathscr{z}})^2 dx = \frac{1}{2} \mathscr{m} \int_0^l (\dot{w} + \dot{\mathscr{z}})^2 dx
    \label{eq_kinEn}
\end{align}
with $m(x)$ the mass per unit length of the beam and $\mathscr{m}$ defined as $\mathscr{m} = \rho_s h_s + 2 \rho_p h_p$. In the above equation, $w = w(x,t)$ is the displacement from the neutral axis at location $x \in [0,l]$ at time $t$. The variable $\mathscr{z}(t)$ represents the displacement of the root of the beam, that is, it is the base motion that occurs in the $z$ direction defined relative to the beam. The terms $\rho$ and $h$ represent the density and thickness, respectively. The subscript $s$ represents the variables corresponding to the substrate and the subscript $p$ indicates those of the piezoceramic. The electric enthalpy for the nonlinear system is given by the relation
\begin{align*}
    \mathcal{V}_\mathcal{H} = \int_V \mathcal{H} dV = \int_{V_b} \mathcal{H} dV_b + \int_{V_p} \mathcal{H} dV_p.
\end{align*}

Substituting the expression for $\mathcal{H}$ in the above equation gives
\begin{align}
    \mathcal{V}_\mathcal{H} &= \int_{V_b} \frac{1}{2} C_b S_x^2 dV_b + \int_{V_c} \left( \frac{1}{2} C^{(0)}_{xx} S_x^2 + \frac{1}{3} C^{(1)}_{xx} S_x^3 + \frac{1}{4} C^{(2)}_{xx} S_x^4 \right.
    \notag
    \\
    & \hspace{3cm} \left. - \gamma_0 S_x E_z - \frac{1}{2} \gamma_1 S_x^2 E_z - \frac{1}{3} \gamma_2 S_x^3 E_z - \frac{1}{2} \nu_0 E_z^2 \right) dV_p
    \label{eq_elecenth}
\end{align}
with beam Young's modulus $C_b$, beam volume $V_b$ and piezoelectric patch volume $V_p$. We recall that the approximation for bending strain in Euler-Bernoulli beam theory is given by
\begin{align*}
    S_x(x,z,t) = -\frac{\partial^2 w(x,t)}{\partial x^2} z, \qquad \forall x \in [0,l], \qquad \forall z \in \left[ -\frac{h_b}{2} - h_c, \frac{h_b}{2} + h_c \right].
\end{align*}

Consider the term $\int_{V_p} \frac{1}{2} C^{(0)}_{xx} S_x^2 $ in the expression for electric enthalpy density. With the substitution of the expression for strain, we get
\begin{align*}
    \int_{V_p} \frac{1}{2} C_{xx}^{(0)} S_x^2 = \frac{1}{2} C_{xx}^{(0)} \int_{V_p} [(w'')^2 z^2] dV &= \frac{1}{2} C_{xx}^{(0)} \left( \int_{a}^{b} (w'')^2 dx \right) \left( \int_0^b dy \right) \left( 2 \int_{\frac{h_b}{2}}^{\frac{h_b}{2} + h_c} z^2 dz \right)
    \\
    &= 2 \underbrace{\left[ \frac{1}{6} C_{xx}^{(0)} b \left\{ \left( \frac{h_b}{2} + h_c \right)^3 - \left( \frac{h_b}{2} \right)^3 \right\} \right]}_{\coloneqq a_{(0,2)}} \int_{a}^{b} (w'')^2 dx
    \\
    &= 2 a_{(0,2)} \int_{a}^{b} (w'')^2 dx.
\end{align*}
The other terms in Equation \ref{eq_elecenth} can be simplified in a similar manner. The expression for electric enthalpy density after simplification has the form
\begin{align}
    \mathcal{V}_\mathcal{H} &= \frac{1}{2} C_b I_b \int_0^l (w'')^2 dx + 2 a_{(0,2)} \int_{a}^{b} (w'')^2 dx + 2 a_{(2,4)} \int_{a}^{b} (w'')^4 dx 
    \\
    & \hspace{3cm} + 2 b_{(1,1)} \left( \int_{a}^{b} w'' dx \right) E_z + 2 b_{(3,1)} \left( \int_{a}^{b} (w'')^3 dx \right) E_z - 2b_{(0,2)} E_z^2,
    \label{eq_elecPot}
\end{align}   

where we define

\begin{align*}
    a_{(0,2)} &:= \frac{1}{6} C_{xx}^{(0)} b \left[ \left( \frac{h_b}{2} + h_c \right)^3 - \left( \frac{h_b}{2} \right)^3 \right], \qquad a_{(2,4)} := \frac{1}{20} C_{xx}^{(2)} b \left[ \left( \frac{h_b}{2} + h_c \right)^5 - \left( \frac{h_b}{2} \right)^5 \right], \\
    b_{(0,2)} &:= \frac{1}{2} \nu_0 b h_c l_c, \qquad 
    b_{(1,1)} := \frac{1}{2} \gamma_0 b \left[ \left( \frac{h_b}{2} + h_c \right)^2 - \left( \frac{h_b}{2} \right)^2 \right], \\
    b_{(3,1)} &:= \frac{1}{12} \gamma_2 b \left[ \left( \frac{h_b}{2} + h_c \right)^4 - \left( \frac{h_b}{2} \right)^4 \right].
\end{align*}

For the time being, we omit the effects of damping in the following derivation. Following the details included in Appendix \ref{app_EOM}, the variational statement of Hamilton's principle yields the pair of equations

\begin{align}
    & m \ddot{w}
    + C_b I_b w'''' 
    + 4 a_{(0,2)} \left( \chi_{[a,b]} w'' \right)'' 
    + 8 a_{(2,4)} (\chi_{[a,b]} (w'')^3)'' 
    \notag
    \\
    & \hspace{4cm}
    + 2 b_{(1,1)} \chi_{[a,b]}'' E_z 
    + 6 b_{(3,1)} (\chi_{[a,b]} (w'')^2 )'' E_z
    = -m \ddot{\mathscr{z}}, 
    \label{eq_eom1}
    \\
    & 2b_{(1,1)} w'(b) - 2b_{(1,1)}w'(a) + 2b_{(3,1)} \left( \int_{a}^{b} (w'')^3 dx \right) + 4b_{(0,2)}E_z = 0,
    \label{eq_eom2}
\end{align}

where $\chi_{[a,b]}$ is the characteristic function of the interval $[a,b]$ defined as in Equation \ref{eq_chaeq}. These equations are subject to the corresponding variational boundary conditions

\begin{align*}
    \Big \{
    C_b I_b w'' + 4a_{(0,2)} \chi_{[a,b]} w'' + 8a_{(2,4)} \chi_{[a,b]} (w'')^3 
    \\
    + 2 b_{(1,1)} \chi_{[a,b]} E_z + 6 b_{(3,1)} \chi_{[a,b]} (w'')^2
    \Big \} \delta w' \big|_0^l &= 0, 
    \\
    \Big \{
    C_b I_b w''' + 4a_{(0,2)} \left(\chi_{[a,b]} w''\right)' + 8a_{(2,4)} \left( \chi_{[a,b]} (w'')^3 \right)'
    \\
    + 2 b_{(1,1)} \chi_{[a,b]}' E_z + 6 b_{(3,1)} \left( \chi_{[a,b]} (w'')^2 \right)'
    \Big \} \delta w \bigg|_0^l &= 0,
\end{align*}
and to the initial conditions $w(0) = w_0$ and $\dot{w}(0) = \dot{w}_0$. 

We know that the effects of nonlinearity in oscillators become most noticeable near the natural frequency. Hence, we approximate the solutions of Equations \ref{eq_eom1} and \ref{eq_eom2} using a single-mode approximation $w(x,t) = \psi(x) u(t)$. Following the detailed analysis in Appendix \ref{app_SMEOM}, the equations of motion are written
\begin{align}
    M \ddot{u}(t) + P \ddot{\mathscr{z}}(t) + \underbrace{[K_b + K_p]}_{K} u(t) + K_N u^3(t) + [B + Q_{N} u^2(t)] E_z = 0,
    \label{eq_SMEOM1}
    \\
    B u(t) + B_N u^3(t) = \mathcal{C} E_z,
    \label{eq_SMEOM2}
\end{align}
for constants $M, P, K_b, K_p, K_N, B, Q_n, B_N,$ and $\mathcal{C}$ defined in Appendix \ref{app_SMEOM}.

Note that the first equation defines the dynamics of the system and the second equation defines an algebraic relation between displacement and the electric field. From the second equation of motion, we get an expression for the electric field that has the form $E_z = [B u(t) + B_N u^3(t)]/\mathcal{C}$. Substituting this expression for electric field into the first equation of motion, we get

\begin{align*}
    M \ddot{u}(t) 
    + \underbrace{\left[ K + \frac{B^2}{\mathcal{C}} \right]}_{\hat{K}} u(t) 
    + \underbrace{\left[ K_N + \frac{B B_N}{\mathcal{C}} + \frac{Q_{N}B}{\mathcal{C}} \right]}_{\hat{K}_{N_1}} u^3(t) 
    + \underbrace{\frac{Q_{N} B_N}{\mathcal{C}}}_{\hat{K}_{N_2}} u^5(t) &= - P \ddot{\mathscr{z}}(t) \\
    M \ddot{u}(t) 
    + \hat{K} u(t) 
    + \hat{K}_{N_1} u^3(t) 
    + \hat{K}_{N_2} u^5(t) &= - P \ddot{\mathscr{z}}(t)
\end{align*}

After introducing a viscous damping term for the representation of energy losses, we have

\begin{align*}
    M \ddot{u}(t)
    + C \dot{u}(t)
    + \hat{K} u(t) 
    + \hat{K}_{N_1} u^3(t) 
    + \hat{K}_{N_2} u^5(t) &= - P \ddot{\mathscr{z}}(t).
\end{align*}

Let us define the state vector $\bm{x} = \{ x_1, x_2 \}^T = \{ u, \dot{u} \}^T$. Now, we can write the first order form of the governing equations as

\begin{align}
    \begin{Bmatrix}
    \dot{x}_1 \\ \dot{x}_2
    \end{Bmatrix}
    =
    \underbrace{\begin{bmatrix}
    0 & 1 \\
    -\frac{\hat{K}}{M} & -\frac{C}{M}
    \end{bmatrix}}_{A}
    \begin{Bmatrix}
    x_1 \\ x_2
    \end{Bmatrix}
    +
    \underbrace{\begin{Bmatrix}
    0 \\ -\frac{P}{M}
    \end{Bmatrix}}_{B} \underbrace{\ddot{\mathscr{z}}(t)}_{\mathscr{u}(t)}
    +
    \underbrace{\begin{Bmatrix}
    0 \\ 1
    \end{Bmatrix}}_{B_N}
    \underbrace{ 
    \left(
    - \frac{\hat{K}_{N_1}}{M} x_1^3(t) - \frac{\hat{K}_{N_2}}{M} x_1^5(t) 
    \right)
    }_{f(\bm{x}(t))},
    \label{eq_piezomodel}
\end{align}
or 
\begin{align*}
    \dot{\bm{x}}(t) = A \bm{x}(t) + B \mathscr{u}(t) + B_N f(\bm{x}(t)).
\end{align*}
We make several observations before proceeding to the adaptive estimation problem treated in the next section. 
Note that the specific form of function $f(\bm{x}) = f(x_1)$ that is given in Equation 10 above has been constructed assuming the only unknown terms are the nonlinearities arising from the constitutive laws. We allow for a wider class of uncertainty that can be expressed as $f(\bm{x}) = f(x_1, x_2)$. For instance, if the viscous damping coefficient is uncertain or unknown, the damping term should be subsumed into $f(x_1, x_2)$. With these considerations in mind, the derivations in the next section are carried out for the more general case when $f = f(x_1,x_2)$. However, when we prepare finite-dimensional approximations in Section \ref{subsec_finapprox} for the simulations in Sections \ref{sec_imp} and \ref{sec_results}, we specialize examples to the case $f = f(x_1)$ described above.

%
\section{Adaptive Estimation in RKHS}
\label{sec_adap}
In this section, we pose the estimation problem for the approximation of the unknown nonlinear function $f$ and review the theory of RKHS adaptive estimation. The governing equation of the plant, the piezoelectric oscillator modeled in Section \ref{sec_model}, has the general form

\begin{align}
    \dot{\bm{x}}(t) = A \bm{x}(t) + B \mathscr{u}(t) + B_N f(\bm{x}(t)).
    \label{eq_plantModel}
\end{align}

We denote the state space of this evolution law by $X = \mathbb{R}^d$, so that $\bm{x}(t) \in X$. Under the assumption of full state observability, the problem of estimation of the states $\bm{x}(t)$ at a given time instant $t$ is a classical state estimation problem. However, the problem of interest in this paper is the estimation of the unknown function $f$. Problems of this type generally involve the definition of an estimator system that evolves in parallel with the actual plant. The model of the estimator for the plant defined by Equation \ref{eq_plantModel} is taken in the form 
\begin{align}
    \dot{\hat{\bm{x}}}(t) = A \hat{\bm{x}}(t) + B \mathscr{u}(t) + B_N \hat{f}(t,\bm{x}(t)).
    \label{eq_estModel}
\end{align}

In Equation \ref{eq_estModel}, note that the estimate $\hat{f}$ of the function $f$ depends not only on the actual (measured) states $\bm{x}(t)$ but also the time $t$. We want the function estimate $\hat{f}(t,\cdot)$ to converge in time to the actual function $f(\cdot)$ in some suitable function space norm as $t \to \infty$. 

In addition to the estimator model, it is also important to define the hypothesis space, the space of functions in which the function $f$ and the function estimate $\hat{f}$ live. In this paper, we assume that the unknown nonlinear function $f$ lives in the infinite dimensional RKHS $\mathcal{H}$ equipped with the reproducing kernel $\mathcal{K}_X: X \times X \to \mathbb{R}$. Recall that the reproducing property of the kernel states that, for any $\bm{x} \in X$ and $f \in \mathcal{H}_X$, $\left( \mathcal{K}(\bm{x},\cdot),f \right)_{\mathcal{H}_X} = f(\bm{x})$. It is well known that the existence of a reproducing kernel guarantees the boundedness of the evaluation functional $\mathcal{E}_{\bm{x}}: \mathcal{H} \to \mathbb{R}$, which is defined by the condition that $\mathcal{E}_{\bm{x}} f = (\mathcal{K}(\bm{x},\cdot),f)_\mathcal{H}$. In this paper, we restrict to RKHS in which the reproducing kernel is bounded by a constant. This implies that the injection $i : \mathcal{H} \to C(\Omega)$ from the RKHS $\mathcal{H}$ to the space of continuous function on $\Omega$, $C(\Omega)$, is uniformly bounded \cite{Bobade2019}. This fact is used to prove the existence and uniqueness of the solution of the error system. A more detailed discussion about RKHS can be found in \cite{Aronszajn1950, Berlinet2011, Muandet2017}.

In addition to the estimator model, we also need an equation that defines the evolution (time derivative) of the function estimate. This is given by the learning law
\begin{align}
    \dot{\hat{f}}(t) = \Gamma^{-1} (B_N \mathcal{E}_{\bm{x}(t)})^* P (\bm{x}(t) - \hat{\bm{x}}(t)),
    \label{eq_lLaw}
\end{align}

where $\Gamma \in \mathbb{R}^+$, $\mathcal{E}_{\bm{x}}$ is the evaluation functional at $\bm{x} \in X$, and the notation $(\cdot)^*$ denotes the adjoint of an operator. Further, the matrix $P \in \mathbb{R}^{d \times d}$ is the symmetric positive definite solution of the Lyapunov's equation $A^T P + PA = - Q$, where $Q \in \mathbb{R}^{d \times d}$ is an arbitrary but fixed symmetric positive definite matrix. 

The existence and uniqueness of a solution for the estimator models given by Equations \ref{eq_estModel} and \ref{eq_lLaw} can be proved under the assumption that the excitation input is continuous and we are working in an uniformly embedded RKHS as mentioned above. The following theorem proves this statement. 
\begin{theorem}
Define $\mathbb{X}:= \mathbb{R}^d \times \mathcal{H}$, and suppose that $\bm{x} \in C([0,T]; \mathbb{R}^d)$, $\mathscr{u} \in C([0,T]; \mathbb{R})$ and that the embedding $i: \mathcal{H} \hookrightarrow C(\Omega)$ is uniform in the sense that there is a constant $C>0$ such that for any $f \in \mathcal{H}$,
\begin{align*}
    \|f\|_{C(\Omega)} \equiv \|if\|_{C(\Omega)} \leq C \|f\|_{\mathcal{H}}.
\end{align*}
Then for any $T > 0$, there is a unique mild solution $(\hat{\bm{x}},\hat{f}) \in C([0,T],\mathbb{X})$ to 
\begin{align}
    \begin{Bmatrix}
    \dot{\hat{\bm{x}}}(t) \\ \dot{\hat{f}}(t)
    \end{Bmatrix}
    = 
    \begin{Bmatrix}
    A \hat{\bm{x}}(t) + B \mathscr{u}(t) + B_N \mathcal{E}_{\bm{x}(t)} \hat{f}(t) \\
    \Gamma^{-1} (B_{N} \mathcal{E}_{\bm{x}(t)})^* P (\bm{x}(t) - \hat{\bm{x}}(t))
    \end{Bmatrix},
    \label{eq_EstMod}
\end{align}
and the map $\hat{X}_0 \equiv (\hat{\bm{x}}_0,\hat{f}_0) \mapsto (\hat{\bm{x}},\hat{f}$) is Lipschitz continuous from $\mathbb{X}$ to $C([0,T],\mathbb{X})$.
\label{thm_1}
\end{theorem}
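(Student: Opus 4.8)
The plan is to recast the coupled system \eqref{eq_EstMod} as a single abstract evolution equation on the product Banach space $\mathbb{X} = \mathbb{R}^d \times \mathcal{H}$ for the unknown $\hat X = (\hat{\bm x}, \hat f)$, and then invoke the standard contraction-mapping theory for semilinear evolution equations in the mild (variation-of-constants) formulation. First I would split the right-hand side into a part generating a semigroup and a remainder. The natural generator is $\mathcal{A} = \begin{pmatrix} A & 0 \\ 0 & 0 \end{pmatrix}$, which generates the $C_0$-semigroup $e^{\mathcal{A}t} = \begin{pmatrix} e^{At} & 0 \\ 0 & I \end{pmatrix}$ on $\mathbb{X}$; since $A$ is a $d\times d$ matrix, $\mathcal{A}$ is in fact a bounded operator, so mild and classical solutions coincide and no unbounded-generator machinery is needed. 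The remaining terms are collected into
\begin{align*}
\mathcal{F}(t,\hat X) = \begin{pmatrix} B\mathscr{u}(t) + B_N \mathcal{E}_{\bm x(t)}\hat f \\ \Gamma^{-1}(B_N\mathcal{E}_{\bm x(t)})^* P(\bm x(t) - \hat{\bm x}) \end{pmatrix},
\end{align*}
so that \eqref{eq_EstMod} becomes $\dot{\hat X} = \mathcal{A}\hat X + \mathcal{F}(t,\hat X)$ with mild form $\hat X(t) = e^{\mathcal{A}t}\hat X_0 + \int_0^t e^{\mathcal{A}(t-s)}\mathcal{F}(s,\hat X(s))\,ds$.

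The crux is to show that $\mathcal{F}$ has the two properties the fixed-point argument requires: continuity in $t$ and a global Lipschitz bound in $\hat X$ that is uniform on $[0,T]$. Both rest on the uniform embedding hypothesis. By the reproducing property the adjoint of the evaluation functional is $\mathcal{E}_{\bm x}^* c = c\,\mathcal{K}(\bm x, \cdot)$, whence $\|\mathcal{E}_{\bm x}^*\|_{\mathcal{L}(\mathbb{R},\mathcal{H})} = \mathcal{K}(\bm x,\bm x)^{1/2}$; the hypothesis $\|f\|_{C(\Omega)} \le C\|f\|_{\mathcal H}$ is equivalent to $\sup_{\bm x}\mathcal{K}(\bm x,\bm x) \le C^2$, so both $\mathcal{E}_{\bm x(t)}$ and $(B_N\mathcal{E}_{\bm x(t)})^*$ are bounded uniformly in $t$ by constants depending only on $C$, $\|B_N\|$, and $\|P\|$. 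Since $\hat f \mapsto B_N\mathcal{E}_{\bm x(t)}\hat f$ and $\hat{\bm x}\mapsto \Gamma^{-1}(B_N\mathcal{E}_{\bm x(t)})^* P\hat{\bm x}$ are linear with uniformly bounded operator norms, $\mathcal{F}(t,\cdot)$ is globally Lipschitz with a constant $L$ independent of $t\in[0,T]$. Continuity of $t\mapsto \mathcal{F}(t,\hat X)$ follows from continuity of $\mathscr u$ and $\bm x$ together with continuity of $t\mapsto \mathcal{E}_{\bm x(t)}$, which in turn comes from continuity of the feature map $\bm x \mapsto \mathcal{K}(\bm x,\cdot)$ accompanying the embedding into $C(\Omega)$.

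With these two facts in hand, existence and uniqueness follow from the contraction mapping theorem applied to the integral operator $(\Phi \hat X)(t) = e^{\mathcal{A}t}\hat X_0 + \int_0^t e^{\mathcal{A}(t-s)}\mathcal{F}(s,\hat X(s))\,ds$ on $C([0,T],\mathbb{X})$. Because $L$ is global, I would equip $C([0,T],\mathbb{X})$ with the weighted norm $\|\hat X\|_\lambda = \sup_{t\in[0,T]} e^{-\lambda t}\|\hat X(t)\|_{\mathbb{X}}$ for $\lambda$ large enough; $\Phi$ is then a contraction on the whole space for any fixed $T>0$ (equivalently, one iterates Picard on short subintervals whose length depends only on $L$ and then patches them together), giving a unique global mild solution on $[0,T]$. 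Finally, Lipschitz dependence on the initial data is obtained by writing the mild form for two solutions $\hat X, \hat Y$ with data $\hat X_0, \hat Y_0$, subtracting, and estimating $\|\hat X(t) - \hat Y(t)\|_{\mathbb{X}} \le \|e^{\mathcal{A}t}\|\,\|\hat X_0 - \hat Y_0\|_{\mathbb{X}} + M L \int_0^t \|\hat X(s) - \hat Y(s)\|_{\mathbb{X}}\,ds$ with $M = \sup_{[0,T]}\|e^{\mathcal{A}s}\|$; Grönwall's inequality then yields $\sup_{[0,T]}\|\hat X(t)-\hat Y(t)\|_{\mathbb{X}} \le M e^{MLT}\|\hat X_0 - \hat Y_0\|_{\mathbb{X}}$, i.e.\ the solution map $\hat X_0 \mapsto (\hat{\bm x},\hat f)$ is Lipschitz from $\mathbb{X}$ into $C([0,T],\mathbb{X})$.

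I expect the main obstacle to be purely at the level of the RKHS operator bounds rather than the ODE theory: one must confirm that the adjoint $(B_N\mathcal{E}_{\bm x(t)})^*$ is a well-defined bounded map $\mathbb{R}^d \to \mathcal{H}$ and that its norm, together with that of $\mathcal{E}_{\bm x(t)}$, is controlled uniformly in $t$ solely through the uniform embedding constant $C$. This is exactly the role of the hypothesis restricting attention to uniformly embedded RKHS; without it the Lipschitz constant could blow up along the trajectory $\bm x(\cdot)$ and the global-in-time conclusion would fail.
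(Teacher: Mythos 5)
Your proposal is correct and follows essentially the same route as the paper: both recast the system as $\dot{\hat X} = \mathcal{A}\hat X + \mathcal{F}(t,\hat X)$ on $\mathbb{X}$ with a bounded block-diagonal generator (the paper allows an arbitrary bounded $A_0$ in the second block and compensates inside $\mathcal{F}$; your choice $A_0 = 0$ is a special case), verify that $\mathcal{F}$ is uniformly globally Lipschitz via the uniform boundedness of $\mathcal{E}_{\bm{x}(t)}$ and continuous in $t$, and then obtain the unique mild solution and Lipschitz dependence on data --- the paper by citing the semilinear existence theorem in Pazy, you by carrying out the underlying Picard/Gr\"onwall argument explicitly.
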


\begin{proof}
We set $X(t):= (\hat{x}(t),\hat{f}(t)) \in \mathbb{X}$. Equation \ref{eq_EstMod} given above can be rewritten as
\begin{align}
    \begin{Bmatrix}
    \dot{\hat{\bm{x}}}(t) \\ \dot{\hat{f}}(t)
    \end{Bmatrix}
    &= 
    \underbrace{\begin{bmatrix}
    A & 0  \\
    0 & A_0
    \end{bmatrix}}_{\mathcal{A}}
    \begin{Bmatrix}
    \hat{\bm{x}}(t) \\ \hat{f}(t)
    \end{Bmatrix}
    +
    \underbrace{\begin{Bmatrix}
    B \mathscr{u}(t) + B_N \mathcal{E}_{\bm{x}(t)} \hat{f}(t)
    \\
    -A_0 \hat{f}(t) + \Gamma^{-1} (B_{N} \mathcal{E}_{\bm{x}(t)})^* P (\bm{x}(t) - \hat{\bm{x}}(t))
    \end{Bmatrix}}_{\mathcal{F}(t,X(t))},
    \label{eq_EstMod2}
    \\
    \begin{Bmatrix}
    \hat{\bm{x}}(t_0) \\
    \hat{f}(t_0)
    \end{Bmatrix}
    &=
    \begin{Bmatrix}
    \hat{\bm{x}}_0 \\
    \hat{f}_0
    \end{Bmatrix}, \notag
\end{align}
where $-A_0$ is an arbitrary bounded linear operator from $\mathcal{H}$ to $\mathcal{H}$. It is clear from the above equation that $\mathcal{A}$ is a bounded linear operator. We know that every bounded linear operator is the infinitesimal generator of a $C_0$-semigroup on $\mathbb{X} := \mathbb{R}^d \times \mathcal{H}$ (Theorem 1.2, Chapter 1 of \cite{Pazy2012}). Now, consider the function $\mathcal{F}$. For each $t \geq 0$, we have

\begin{align*}
    \| \mathcal{F}(t,\hat{X}) - \mathcal{F}(t,\hat{Y}) \| = \left \| 
    \begin{Bmatrix}
    B_N \mathcal{E}_{\bm{x}(t)} (\hat{f}_{\hat{x}}(t) - \hat{f}_{\hat{y}} (t)) 
    \\
    -A_0 (\hat{f}_{\hat{x}}(t) - \hat{f}_{\hat{y}} (t)) + \Gamma^{-1} (B_N \mathcal{E}_{\bm{x}(t)})^* P (\hat{y}(t) - \hat{x}(t))
    \end{Bmatrix}
    \right \|
    \leq D \| \hat{X} - \hat{Y} \|,
\end{align*}
where $\hat{X}:= (\hat{x}, \hat{f}_{\hat{x}})$, $\hat{Y}:= (\hat{y}, \hat{f}_{\hat{y}})$, and $D \geq 0$ is a constant. Note that we are able to achieve the above bound because of uniform boundedness of the evaluation functional $\mathcal{E}_{x(t)}$. Thus, for each $t \geq 0$, the map $\hat{X} \mapsto \mathcal{F}(t,\hat{X})$ is uniformly globally Lipschitz continuous. We also note that the map $t \mapsto \mathcal{F}(t,\hat{X})$ is continuous for each $\hat{X} \in \mathbb{X}$ since $\mathscr{u}$ is continuous. Using Theorem 1.2 in Chapter 6 of \cite{Pazy2012}, we can conclude that the above initial value problem has a unique mild solution, and the map $\hat{X}_0 \equiv (\hat{\bm{x}}_0,\hat{f}_0) \mapsto (\hat{\bm{x}},\hat{f}$) is Lipschitz continuous from $\mathbb{X}$ to $C([0,T],\mathbb{X})$. 
\end{proof}

Suppose that $\tilde{\bm{x}}(t) :=\bm{x}(t) - \hat{\bm{x}}(t)$ and $\tilde{f}(t,\cdot):= f(\cdot) - \hat{f}(t,\cdot)$ denote the state error and the function error, respectively. Equations \ref{eq_plantModel}, \ref{eq_estModel} and \ref{eq_lLaw} can now be expressed in terms of the error equation

\begin{align}
    \begin{Bmatrix}
    \dot{\tilde{\bm{x}}}(t) \\ \dot{\tilde{f}}(t)
    \end{Bmatrix}
    = 
    \begin{bmatrix}
    A & B_N \mathcal{E}_{\bm{x}(t)} \\
    - \Gamma^{-1} (B_{N} \mathcal{E}_{\bm{x}(t)})^* P & 0
    \end{bmatrix}
    \begin{Bmatrix}
    \tilde{\bm{x}}(t) \\ \tilde{f}(t)
    \end{Bmatrix}.
    \label{eq_errEst}
\end{align}

Note, the above equation evolves in $\mathbb{R}^d \times \mathcal{H}$. Also, even though the original Equation \ref{eq_plantModel} and the estimator Equation \ref{eq_estModel} are not the same as in Reference \cite{Bobade2019}, the above error equation does have the same form as that studied in \cite{Bobade2019}. The existence and uniqueness of a solution for this equation are given by the following theorem.

\begin{theorem}
Define $\mathbb{X}:= \mathbb{R}^d \times \mathcal{H}$, and suppose that $\bm{x} \in C([0,T]; \mathbb{R}^d)$ and that the embedding $i: \mathcal{H} \hookrightarrow C(\Omega)$ is uniform in the sense that there is a constant $C>0$ such that for any $f \in \mathcal{H}$,
\begin{align*}
    \|f\|_{C(\Omega)} \equiv \|if\|_{C(\Omega)} \leq C \|f\|_{\mathcal{H}}.
\end{align*}
Then for any $T > 0$, there is a unique mild solution $(\tilde{\bm{x}},\tilde{f}) \in C([0,T],\mathbb{X})$ to Equations \ref{eq_errEst} and the map $X_0 \equiv (\tilde{\bm{x}}_0,\tilde{f}_0) \mapsto (\tilde{\bm{x}},\tilde{f})$ is Lipschitz continuous from $\mathbb{X}$ to $C([0,T],\mathbb{X})$.
\end{theorem}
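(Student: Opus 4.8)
The plan is to recognize that the error system \eqref{eq_errEst} has exactly the abstract form already treated in Theorem \ref{thm_1}, and to mirror that argument almost verbatim. First I would set $X(t) := (\tilde{\bm{x}}(t), \tilde{f}(t)) \in \mathbb{X}$ and rewrite \eqref{eq_errEst} as the semilinear evolution equation
\begin{align*}
    \dot{X}(t) = \mathcal{A} X(t) + \mathcal{F}(t, X(t)),
\end{align*}
peeling off a constant, bounded, block-diagonal generator
\begin{align*}
    \mathcal{A} = \begin{bmatrix} A & 0 \\ 0 & A_0 \end{bmatrix},
\end{align*}
with $A_0$ an arbitrary bounded linear operator on $\mathcal{H}$ (the choice $A_0 = 0$ is convenient), and absorbing the remaining time-dependent coupling into
\begin{align*}
    \mathcal{F}(t, X(t)) = \begin{Bmatrix} B_N \mathcal{E}_{\bm{x}(t)} \tilde{f}(t) \\ -A_0 \tilde{f}(t) - \Gamma^{-1} (B_N \mathcal{E}_{\bm{x}(t)})^* P \, \tilde{\bm{x}}(t) \end{Bmatrix}.
\end{align*}
Unlike \eqref{eq_EstMod2}, the exogenous input $\mathscr{u}$ cancels in the error variables, so $\mathcal{F}$ is here homogeneous and in fact linear in $X$; this only simplifies the verification, and it is also why the present statement drops the hypothesis $\mathscr{u} \in C([0,T];\mathbb{R})$.

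Next I would check the two hypotheses of the mild-solution theorem (Theorem 1.2, Chapter 6 of \cite{Pazy2012}). Since $\mathcal{A}$ is a bounded linear operator on $\mathbb{X}$, it generates a $C_0$-semigroup by Theorem 1.2, Chapter 1 of \cite{Pazy2012}. For the Lipschitz property, the decisive ingredient is the uniform boundedness of the evaluation functional supplied by the hypothesis: the uniform embedding gives $|\mathcal{E}_{\bm{x}} f| = |f(\bm{x})| \leq \|f\|_{C(\Omega)} \leq C \|f\|_{\mathcal{H}}$, hence $\|\mathcal{E}_{\bm{x}(t)}\| \leq C$ for every $t$, and the identity $\|\mathcal{E}_{\bm{x}(t)}^*\| = \|\mathcal{E}_{\bm{x}(t)}\|$ transfers the same bound to the adjoint. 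Because $B_N$, $P$, $\Gamma^{-1}$, and $A_0$ are fixed bounded operators, the estimate
\begin{align*}
    \| \mathcal{F}(t, \hat{X}) - \mathcal{F}(t, \hat{Y}) \|_{\mathbb{X}} \leq D \, \|\hat{X} - \hat{Y}\|_{\mathbb{X}}
\end{align*}
holds with a constant $D$ independent of $t$, exactly as in the proof of Theorem \ref{thm_1}.

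The one point that warrants genuine care, and the place I expect the main obstacle, is the continuity of $t \mapsto \mathcal{F}(t, X)$ for fixed $X$. This reduces to showing that $t \mapsto \mathcal{E}_{\bm{x}(t)}$ and $t \mapsto (B_N \mathcal{E}_{\bm{x}(t)})^*$ are continuous in the operator norm. Since the adjoint acts by $c \mapsto c\, \mathcal{K}(\bm{x}(t), \cdot)$, operator-norm continuity is equivalent to continuity of $t \mapsto \mathcal{K}(\bm{x}(t), \cdot)$ in the $\mathcal{H}$-norm; this follows from continuity of the trajectory $\bm{x} \in C([0,T]; \mathbb{R}^d)$ together with continuity of the reproducing kernel, which is immediate in the bounded-kernel setting of this paper. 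With both hypotheses in hand, Theorem 1.2, Chapter 6 of \cite{Pazy2012} yields a unique mild solution $(\tilde{\bm{x}}, \tilde{f}) \in C([0,T], \mathbb{X})$ together with the Lipschitz continuity of the data-to-solution map $X_0 \mapsto (\tilde{\bm{x}}, \tilde{f})$, completing the argument. In short, the theorem is essentially a corollary of Theorem \ref{thm_1}: the error system inherits its bounded-generator-plus-Lipschitz-perturbation structure, and no new analytic mechanism beyond the uniform embedding is required.
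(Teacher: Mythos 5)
Your proposal is correct and follows exactly the route the paper intends: it observes that the error system has the same bounded-generator-plus-globally-Lipschitz structure as Equation \ref{eq_EstMod2} and reruns the proof of Theorem \ref{thm_1} via Theorem 1.2, Chapter 6 of \cite{Pazy2012} (the paper itself omits the details, stating only that the proof is ``very similar'' to that of Theorem \ref{thm_1} and deferring to \cite{Bobade2019}). Your added care about the operator-norm continuity of $t \mapsto \mathcal{E}_{\bm{x}(t)}$, needed now that $\mathscr{u}$ has cancelled from the error dynamics, is a welcome refinement rather than a departure.
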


The proof for this theorem is very similar to the proof of Theorem \ref{thm_1} and is given in \cite{Bobade2019}. Note that the above theorem does not study the stability nor the asymptotic stability of the error system. In other words, the convergence of the state error and the function error to the origin is not addressed by this theorem. This aspect is addressed in the following subsection.

\subsection{Persistence of Excitation}
\label{subsec_pe}
The convergence of state and function errors is guaranteed by additional conditions, commonly referred to as the persistence of excitation (PE) conditions \cite{Ioannou, Sastry2011, Narendra2012}. These have been extended to the RKHS framework in \cite{jia2020a,jia2020b}. This section reviews the persistence of excitation conditions for adaptive estimators on RKHS in detail.

Before taking a look at the PE conditions for the adaptive estimator in the RKHS, it is important to note that they are defined over a set $\Omega \subseteq X$. Now, we can define $\mathcal{H}_\Omega := \overline{ \{ \mathcal{K}(\bm{x},\cdot) | \bm{x} \in \Omega \} }$. Note that $\mathcal{H}_\Omega$ is subspace of $\mathcal{H}_X$. The following definitions give us two closely related versions of the PE condition on the set $\Omega$.

\begin{definition}
\label{def_PE1}
The trajectory $\bm{x}: t \mapsto \bm{x}(t) \in \mathbb{R}^d$ persistently excites the indexing set $\Omega$ and the RKHS $\mathcal{H}_\Omega$ provided there exist positive constants $T_0, \gamma, \delta,$ and $\Delta$, such that for each $t \geq T_0$ and any $g \in \mathcal{H}_\Omega$ with $\|g\|_{H_\Omega} = 1$, there exists exists $s \in [t,t+\Delta]$ such that
\begin{align*}
    \left| \int_{s}^{s+\delta} \mathcal{E}_{\bm{x}(\tau)} g d \tau \right| \geq \gamma > 0.
\end{align*}
\end{definition}

\begin{definition}
\label{def_PE2}
The trajectory $\bm{x}: t \mapsto \bm{x}(t) \in \mathbb{R}^d$ persistently excites the indexing set $\Omega$ and the RKHS $H_\Omega$ provided there exists positive constants $T_0$, $\gamma$, and $\Delta$, such that 
\begin{align*}
    \int_t^{t+\Delta} \left( \mathcal{E}^*_{\bm{x}(\tau)} \mathcal{E}_{\bm{x}(\tau)}g,g \right)_{H_\Omega} d \tau \geq \gamma > 0
\end{align*}
for all $t \geq T_0$ and any $g \in H_\Omega$ with $\| g \|_{H_\Omega} = 1$.
\end{definition}

Notice that both the PE conditions are defined on the set $\Omega$. It would be ideal if $\Omega = X$, the space on which the nonlinear function is defined. However, in most practical applications, the set $\Omega$ is a subset of the state space $X$. The following theorem relates both the PE conditions given above.

\begin{theorem}
The PE condition in Definition PE. \ref{def_PE1} implies the one in Definition PE. \ref{def_PE2}. Further, if the family of functions defined by $\{ g(\bm{x}(\cdot)): t \mapsto g(\bm{x}(t)) : \| g \|_{H_\Omega} = 1, g \in H_\Omega \}$ is uniformly equicontinuous, then the PE condition in Definition PE. \ref{def_PE2} implies the one in Definition PE. \ref{def_PE1},
\end{theorem}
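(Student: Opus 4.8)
The plan is to first reduce both PE conditions to statements about the scalar function $\tau \mapsto g(\bm{x}(\tau)) = \mathcal{E}_{\bm{x}(\tau)} g$. Using the reproducing property one checks that $\mathcal{E}^*_{\bm{x}(\tau)} c = c\, \mathcal{K}(\bm{x}(\tau),\cdot)$ for $c \in \mathbb{R}$, so that $(\mathcal{E}^*_{\bm{x}(\tau)} \mathcal{E}_{\bm{x}(\tau)} g, g)_{H_\Omega} = |g(\bm{x}(\tau))|^2$. Thus Definition PE. \ref{def_PE2} is the assertion that $\int_t^{t+\Delta} |g(\bm{x}(\tau))|^2 \, d\tau \geq \gamma$, while Definition PE. \ref{def_PE1} asserts that $\left| \int_s^{s+\delta} g(\bm{x}(\tau)) \, d\tau \right| \geq \gamma$ for some $s$ in a window of length $\Delta$. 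With this reformulation both directions become estimates on a scalar integral.

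For the implication PE. \ref{def_PE1} $\Rightarrow$ PE. \ref{def_PE2}, I would apply the Cauchy--Schwarz inequality in time. Given the $s \in [t, t+\Delta]$ supplied by Definition PE. \ref{def_PE1},
\[
\gamma^2 \leq \left| \int_s^{s+\delta} g(\bm{x}(\tau)) \, d\tau \right|^2 \leq \delta \int_s^{s+\delta} |g(\bm{x}(\tau))|^2 \, d\tau \leq \delta \int_t^{t+\Delta+\delta} |g(\bm{x}(\tau))|^2 \, d\tau,
\]
where the last step uses nonnegativity of the integrand together with $[s,s+\delta] \subseteq [t, t+\Delta+\delta]$. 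This yields Definition PE. \ref{def_PE2} with window $\Delta + \delta$ and constant $\gamma^2/\delta$. This direction is routine and needs no equicontinuity hypothesis.

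The substantive direction is PE. \ref{def_PE2} $\Rightarrow$ PE. \ref{def_PE1}, and the key step is to convert an $L^2$ lower bound into an $L^1$ lower bound on a signed integral, which requires ruling out sign cancellation. From $\int_t^{t+\Delta} |g(\bm{x}(\tau))|^2 d\tau \geq \gamma$ over an interval of length $\Delta$, continuity of $g \circ \bm{x}$ gives a point $\tau^* \in [t, t+\Delta]$ with $|g(\bm{x}(\tau^*))| \geq \sqrt{\gamma/\Delta} =: M$. Now I invoke uniform equicontinuity with $\epsilon = M/2$ to obtain $\eta > 0$, independent of $g$ and of $t$, such that $|g(\bm{x}(\tau)) - g(\bm{x}(\tau^*))| < M/2$ whenever $|\tau - \tau^*| < \eta$. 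On $[\tau^*, \tau^* + \delta]$ with $\delta := \eta/2$, the value $g(\bm{x}(\tau))$ thus stays within $M/2$ of $g(\bm{x}(\tau^*))$; in particular it keeps the sign of $g(\bm{x}(\tau^*))$ and satisfies $|g(\bm{x}(\tau))| \geq M/2$. Hence
\[
\left| \int_{\tau^*}^{\tau^* + \delta} g(\bm{x}(\tau)) \, d\tau \right| = \int_{\tau^*}^{\tau^* + \delta} |g(\bm{x}(\tau))| \, d\tau \geq \frac{\delta M}{2} = \frac{\eta}{4}\sqrt{\frac{\gamma}{\Delta}} =: \gamma' > 0,
\]
and taking $s = \tau^* \in [t, t+\Delta]$ gives Definition PE. \ref{def_PE1} with this $\delta$, $\gamma'$, and window $\Delta$.

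The main obstacle is exactly this sign-cancellation issue: absent control on the oscillation of $g(\bm{x}(\cdot))$, a large $L^2$ mass could be generated by a rapidly sign-alternating function whose signed integral over every short subinterval is negligible, and the implication would fail. Uniform equicontinuity is what forbids this, and it is essential that the modulus $\eta$ be \emph{uniform} over the unit sphere of $H_\Omega$ so that a single $\delta$ serves every $g$ at once, as the quantifier structure of Definition PE. \ref{def_PE1} demands. I would also flag the minor bookkeeping that $\tau^*$ may fall near the right end of the window, so one should either let the resulting window grow by $\delta$ or use the subinterval $[\tau^* - \delta, \tau^*]$ when $\tau^*$ is close to $t+\Delta$; either adjustment is harmless.
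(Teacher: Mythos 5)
Your proof is correct. The paper itself states this theorem without proof (the PE framework results are deferred to the cited references), so there is no in-paper argument to compare against; your reduction to the scalar signal $\tau \mapsto g(\bm{x}(\tau))$ via $(\mathcal{E}^*_{\bm{x}(\tau)}\mathcal{E}_{\bm{x}(\tau)}g,g)_{H_\Omega} = |g(\bm{x}(\tau))|^2$, the Cauchy--Schwarz step for the forward implication, and the equicontinuity-based sign-preservation argument for the converse constitute the standard proof of this equivalence. One small remark: the final bookkeeping concern is moot, since Definition PE.~\ref{def_PE1} only requires $s \in [t,t+\Delta]$ and places no constraint on $s+\delta$, so taking $s=\tau^*$ needs no adjustment even when $\tau^*$ is near $t+\Delta$.
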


With the PE conditions defined, the following theorem addresses the convergence of the states of the error system to the origin.

\begin{theorem}
Suppose the trajectory $\bm{x} : t \mapsto \bm{x}(t)$ persistently exists the RKHS $\mathcal{H}_\Omega$ in the sense of Definition PE. \ref{def_PE1}. Then the estimation error system in Equation \ref{eq_errEst} is uniformly asymptotically stable at the origin. In particular, we have
\begin{align*}
    \lim_{t \to \infty} \| \tilde{\bm{x}}(t) \| = 0, \hspace{1in} \lim_{t \to \infty} \| \tilde{f}(t) \|_{\mathcal{H}_\Omega} = 0.
\end{align*}
\end{theorem}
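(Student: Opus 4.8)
The plan is to run a direct Lyapunov argument for the linear time-varying error system of Equation \ref{eq_errEst}, and then to upgrade the resulting (mere) stability into genuine convergence of the function error by feeding in the persistence-of-excitation hypothesis. First I would introduce the candidate Lyapunov functional on $\mathbb{X} = \mathbb{R}^d \times \mathcal{H}$,
\begin{align*}
    V(\tilde{\bm{x}}, \tilde{f}) = \tilde{\bm{x}}^T P \tilde{\bm{x}} + \Gamma \| \tilde{f} \|_{\mathcal{H}}^2,
\end{align*}
where $P = P^T > 0$ solves $A^T P + PA = -Q$ and $\Gamma \in \mathbb{R}^+$ is the learning gain. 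Differentiating $V$ along the trajectories of Equation \ref{eq_errEst} and substituting the two error equations produces the cross terms $2\tilde{\bm{x}}^T P B_N \mathcal{E}_{\bm{x}(t)} \tilde{f}$ and $-2 \left( \tilde{f}, (B_N \mathcal{E}_{\bm{x}(t)})^* P \tilde{\bm{x}} \right)_{\mathcal{H}}$; by the definition of the adjoint together with the symmetry of $P$ these are equal and opposite, so they cancel exactly. What survives is $\dot{V} = \tilde{\bm{x}}^T (A^T P + PA) \tilde{\bm{x}} = -\tilde{\bm{x}}^T Q \tilde{\bm{x}} \leq 0$. The uniform boundedness of $\mathcal{E}_{\bm{x}(t)}$ guaranteed by the uniform embedding hypothesis is precisely what makes each of these terms well defined in the infinite-dimensional setting.

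From $\dot{V} \leq 0$ I would conclude that $V$ is nonincreasing and bounded below by zero, hence $\tilde{\bm{x}}(t)$ and $\| \tilde{f}(t) \|_{\mathcal{H}}$ are uniformly bounded and $V(t)$ tends to a finite limit. Integrating the inequality over $[0,\infty)$ shows $\tilde{\bm{x}} \in L^2([0,\infty); \mathbb{R}^d)$. Since $\dot{\tilde{\bm{x}}} = A \tilde{\bm{x}} + B_N \mathcal{E}_{\bm{x}(t)} \tilde{f}$ is bounded (again using the bound on $\mathcal{E}_{\bm{x}(t)}$ and the boundedness of $\tilde{\bm{x}}, \tilde{f}$), the signal $\tilde{\bm{x}}$ is uniformly continuous, and Barbalat's lemma then gives $\lim_{t \to \infty} \| \tilde{\bm{x}}(t) \| = 0$. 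This settles the state-error half of the claim without any use of persistence of excitation.

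The essential difficulty is the second limit. Monotone decay of $V$ alone does not force $\tilde{f}$ to zero: the learning law drives the function error only through the observed channel $\mathcal{E}_{\bm{x}(t)}$, so any component of $\tilde{f}$ that is never seen by the evaluation functionals stays frozen. Since each kernel section $\mathcal{K}(\bm{x}(t), \cdot)$ lies in $\mathcal{H}_\Omega$, the range of $(B_N \mathcal{E}_{\bm{x}(t)})^*$ lies in $\mathcal{H}_\Omega$, so the $\mathcal{H}_\Omega^\perp$-part of $\tilde{f}$ is invariant and only its $\mathcal{H}_\Omega$-projection can decay; this is exactly why the conclusion is phrased in the $\mathcal{H}_\Omega$ norm. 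To drive this projection to zero I would invoke the PE condition of Definition PE.\ref{def_PE1}: the goal is to promote stability to \emph{uniform asymptotic} stability at the origin of the full error system. The mechanism is that over each window $[t, t+\Delta]$ the PE inequality supplies a strictly positive lower bound on the integrated action of $\mathcal{E}_{\bm{x}(\tau)}$ against any unit-norm $g \in \mathcal{H}_\Omega$, and this lower bound can be converted into a definite decrement of $V$ across the window proportional to $\| \tilde{f}(t) \|_{\mathcal{H}_\Omega}^2$; summing the decrements over consecutive windows yields $\lim_{t \to \infty} \| \tilde{f}(t) \|_{\mathcal{H}_\Omega} = 0$.

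I expect this last step to be the main obstacle, and I would follow the infinite-dimensional PE-to-uniform-asymptotic-stability arguments of \cite{jia2020a, jia2020b, Bobade2019}. The delicate points are that bounded sets in the infinite-dimensional space $\mathcal{H}_\Omega$ are not compact, so one cannot naively extract convergent subsequences to close a contradiction argument, and that the per-window decrement must be made uniform in the initial time $t \geq T_0$ rather than being established along a single fixed trajectory. It is the integral form of the PE condition in Definition PE.\ref{def_PE1} (equivalently Definition PE.\ref{def_PE2}) that supplies this uniformity and allows the window-by-window estimate to assemble into the desired uniform asymptotic stability.
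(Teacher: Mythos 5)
The paper itself contains no proof of this theorem---it defers entirely to \cite{Guo2019}---so there is no internal argument to compare yours against line by line. That said, your sketch follows the standard route, and it is the one taken in the cited antecedents: the Lyapunov functional $V(\tilde{\bm{x}},\tilde f)=\tilde{\bm{x}}^TP\tilde{\bm{x}}+\Gamma\|\tilde f\|_{\mathcal H}^2$, the exact cancellation of the cross terms via the adjoint identity $\bigl(\tilde f,(B_N\mathcal E_{\bm{x}(t)})^*P\tilde{\bm{x}}\bigr)_{\mathcal H}=\tilde{\bm{x}}^TPB_N\mathcal E_{\bm{x}(t)}\tilde f$, the resulting $\dot V=-\tilde{\bm{x}}^TQ\tilde{\bm{x}}\le 0$, boundedness, $L^2$ membership of $\tilde{\bm{x}}$, and Barbalat's lemma for the first limit. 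That half is complete and correct, and your observation that the range of $(B_N\mathcal E_{\bm{x}(t)})^*$ confines the learnable part of $\tilde f$ to $\mathcal H_\Omega$ is exactly the right explanation for why the conclusion is phrased in the $\mathcal H_\Omega$ norm.

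The gap is that the second limit---the actual content of the theorem---is only planned, not proved. Note first that $\dot V=-\tilde{\bm{x}}^TQ\tilde{\bm{x}}$ contains no negative term in $\tilde f$, so the ``definite decrement of $V$ proportional to $\|\tilde f(t)\|_{\mathcal H_\Omega}^2$'' cannot be read off from $\dot V$ directly; it must be extracted from the coupling, typically by arguing that a $\tilde f$ bounded away from zero in $\mathcal H_\Omega$, acting through $\dot{\tilde{\bm{x}}}=A\tilde{\bm{x}}+B_N\mathcal E_{\bm{x}(t)}\tilde f$ under the PE inequality of Definition PE.~\ref{def_PE1}, forces $\tilde{\bm{x}}$ to leave every neighborhood of zero on each window, contradicting $V\to V_\infty$. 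Carrying this out requires controlling the variation of $\tilde f$ over a window and obtaining estimates uniform over the (non-compact) unit ball of $\mathcal H_\Omega$ and over the window start time; in infinite dimensions PE does not automatically yield uniform asymptotic stability, which is precisely why \cite{jia2020a,jia2020b,Guo2019} devote separate work to it. A further small point: your use of $\mathcal K(\bm{x}(t),\cdot)\in\mathcal H_\Omega$ presumes the trajectory (eventually) lies in $\Omega$, which is implicit in choosing $\Omega$ inside the positive limit set but should be stated. You have correctly identified the obstacles, but the theorem \emph{is} the resolution of those obstacles, and your proposal defers that resolution to the references just as the paper does.
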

The proof for this theorem can be found in \cite{Guo2019}. Intuitively, the second PE condition implies that the state trajectory should repeatedly enter every neighborhood of all the points in the set $\Omega$ infinitely many times. To satisfy this, it makes sense to pick the set $\Omega$ to be the positive limit set $\omega^+(\bm{x}_0)$ or one of its subsets. The following theorem from \cite{Kurdila2019PE} affirms that the persistently excited sets are in fact contained in the positive limit set.
\begin{theorem}
Let $H_X$ be the RKHS of functions over $X$ and suppose that this RKHS includes a rich family of bump functions. If the PE condition in Definition PE. \ref{def_PE2} holds for $\Omega$, then $\Omega \subseteq \omega^+(\bm{x}_0)$, the positive limit set corresponding to the initial condition $\bm{x}_0$.
\end{theorem}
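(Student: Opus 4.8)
The plan is to argue by contradiction, exploiting the fact that the integrand in Definition PE.~\ref{def_PE2} is nothing but a pointwise evaluation of $g$ along the trajectory. First I would recast the PE.~\ref{def_PE2} quantity in geometrically transparent form: since $\mathcal{E}_{\bm{x}(\tau)} g = g(\bm{x}(\tau)) \in \mathbb{R}$, we have $\left( \mathcal{E}^*_{\bm{x}(\tau)}\mathcal{E}_{\bm{x}(\tau)} g, g\right)_{H_\Omega} = |g(\bm{x}(\tau))|^2$, so the condition reads $\int_t^{t+\Delta} |g(\bm{x}(\tau))|^2 \, d\tau \ge \gamma > 0$ for every unit-norm $g\in H_\Omega$ and every $t \ge T_0$. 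The contradiction hypothesis is that some point $z \in \Omega$ satisfies $z \notin \omega^+(\bm{x}_0)$.

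Next I would convert ``$z \notin \omega^+(\bm{x}_0)$'' into the statement that the trajectory eventually avoids a ball around $z$. By the definition of the positive limit set, $z \notin \omega^+(\bm{x}_0)$ means there exist $\epsilon_0 > 0$ and $T_1 \ge 0$ such that $\|\bm{x}(t) - z\| \ge \epsilon_0$ for all $t \ge T_1$; equivalently, the trajectory never re-enters the open ball $B(z,\epsilon_0)$ after time $T_1$. (One could instead invoke closedness of $\omega^+(\bm{x}_0)$, but the definition supplies this avoidance property directly.)

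The heart of the argument is then to produce a test function that ``sees'' only this forbidden ball. Using the hypothesis that $H_X$ carries a rich family of bump functions, I would select a nonzero $g \in H_\Omega$ whose support lies inside $B(z,\epsilon_0)$, normalized so that $\|g\|_{H_\Omega} = 1$. For any $t \ge \max(T_0, T_1)$ and all $\tau \in [t, t+\Delta]$, the point $\bm{x}(\tau)$ lies outside $B(z,\epsilon_0)$, hence $g(\bm{x}(\tau)) = 0$; integrating yields $\int_t^{t+\Delta}|g(\bm{x}(\tau))|^2 \, d\tau = 0$, which contradicts the lower bound $\gamma > 0$ in Definition PE.~\ref{def_PE2}. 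Since $z \in \Omega$ was arbitrary, every point of $\Omega$ must lie in $\omega^+(\bm{x}_0)$, i.e. $\Omega \subseteq \omega^+(\bm{x}_0)$.

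The hard part will be the construction of this localized test function, and it is precisely where the ``rich family of bump functions'' hypothesis must be made quantitative. Two points need care: (i) the bump must be supported inside $B(z,\epsilon_0)$ so that the evaluation along the avoided trajectory vanishes exactly, and (ii) the bump must belong to $H_\Omega$, not merely to the ambient $H_X$. Since $H_\Omega$ is the orthogonal complement of the functions vanishing on $\Omega$, membership in $H_\Omega$ is not automatic for an arbitrary bump, so the hypothesis must be strong enough to guarantee a $B(z,\epsilon_0)$-supported bump in $H_\Omega$ for every $z \in \Omega$. If only bumps that are \emph{small}, rather than identically zero, outside $B(z,\epsilon_0)$ are available, the argument still closes by taking a sequence of increasingly concentrated unit-norm bumps $g_n \in H_\Omega$ with $\sup_{y \notin B(z,\epsilon_0)} |g_n(y)| \to 0$ and estimating $\int_t^{t+\Delta}|g_n(\bm{x}(\tau))|^2 \, d\tau \le \Delta \sup_{y \notin B(z,\epsilon_0)}|g_n(y)|^2 \to 0$, again contradicting $\gamma > 0$.
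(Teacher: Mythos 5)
Your argument is sound and is, in substance, the standard proof of this result; note that the paper itself does not prove the theorem but defers to \cite{Kurdila2019PE}, where essentially this contradiction argument is carried out. The reduction of the PE integrand to $|g(\bm{x}(\tau))|^2$, the passage from $z \notin \omega^+(\bm{x}_0)$ to eventual avoidance of a ball $B(z,\epsilon_0)$ by the trajectory, and the resulting violation of the uniform lower bound $\gamma$ by a localized unit-norm test function are all correct. The one point you flag --- membership of the bump in $H_\Omega$ rather than merely in $H_X$ --- is indeed where the ``rich family of bump functions'' hypothesis must be read carefully; the intended resolution is that, since $z \in \Omega$, kernel sections such as $\mathcal{K}(z,\cdot)$ (suitably normalized) lie in $H_\Omega$ by construction, so for kernels whose sections are bumps of adjustable support the required test function is available without a projection step, and otherwise your approximate-bump fallback, bounding $\int_t^{t+\Delta}|g_n(\bm{x}(\tau))|^2\,d\tau \le \Delta \sup_{y\notin B(z,\epsilon_0)}|g_n(y)|^2 \to 0$, closes the gap.
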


%

\subsection{Finite Dimensional Approximation}
\label{subsec_finapprox}
As mentioned above, the evolution of the error equation and the learning law for the RKHS adaptive estimator is in  $\mathbb{R} \times \mathcal{H}$. In essence, the learning law constitutes a distributed parameter system since $\tilde{f}(t)$ evolves in a infinite-dimensional space. Thus, to implement this adaptive estimator, the persistently excited infinite-dimensional space $\mathcal{H}_{\Omega}$ is approximated by a nested, dense collection $\{ \mathcal{H}_n \}_{n \in \mathbb{N}}$ of finite-dimensional subspaces. Recall that even though the particular nonlinear function $f$ based on the choice of constitutive nonlinearities in Equation \ref{eq_eed} is a function $f = f(x_1)$, we have elected to cast the problem in terms of the more general nonlinear function $f = f(x_1,x_2)$. In this section, we will continue with the analysis of finite-dimensional approximation for the more general unknown nonlinear function $f = f(x_1,x_2)$, which results in Equations \ref{eq_llapprox1} and \ref{eq_llapprox2} below. Modifications of these equations to study the particular case in which $f = f(x_1)$ are straightforward, and we summarize this specific case at the beginning of Section \ref{sec_results}. We leave the details to the reader. Let $\Pi_n$ represent the projection operator from infinite-dimensional $\mathcal{H}_{\Omega}$ to the finite-dimensional $\mathcal{H}_n$. Now, the finite-dimensional approximations of the adaptive estimator equations can be expressed as
\begin{align}
    \dot{\hat{\bm{x}}}_n(t) &= A \hat{\bm{x}}_n(t) + B \mathscr{u}(t) + B_{N} \mathcal{E}_{\bm{x}(t)} \Pi_n^* \hat{f}_n(t), 
    \label{eq_llapprox1}
    \\
    \dot{\hat{f}}_n(t) &= \Gamma^{-1} \left( B_{N} \mathcal{E}_{\bm{x}(t)} \Pi_n^* \right)^* P \tilde{\bm{x}}_n(t),
    \label{eq_llapprox2}
\end{align}
where $ \tilde{\bm{x}}_n:= \bm{x} - \hat{\bm{x}}_n$.
%
\begin{theorem}
Suppose that $\bm{x} \in C([0,T],\mathbb{R}^d)$ and that the embedding $i : \mathcal{H} \hookrightarrow C(\Omega)$ is uniform in the sense that 
\begin{align*}
    \|f\|_{C(\Omega)} \equiv \|if\|_{C(\Omega)} \leq C \|f\|_{\mathcal{H}}.
\end{align*}
Then for any $T>0$,
\begin{align*}
    \| \hat{\bm{x}} - \hat{\bm{x}}_n \|_{C([0,T];\mathbb{R}^d)} &\to 0, \\
    \|\hat{f} - \hat{f}_n \|_{C([0,T];\mathbb{R}^d)} &\to 0,
\end{align*}
as $n \to \infty$.
\end{theorem}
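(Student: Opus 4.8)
The plan is to treat Equations \ref{eq_llapprox1} and \ref{eq_llapprox2} as a Galerkin (projection) approximation of the abstract evolution equation \ref{eq_EstMod2} whose well-posedness was established in Theorem \ref{thm_1}, and to control the error by a consistency-plus-stability estimate closed with Gronwall's inequality. Identifying the finite-dimensional estimate $\hat f_n(t)$ with its image $\Pi_n^*\hat f_n(t)\in\mathcal H_n\subseteq\mathcal H$, I would write the exact pair $X:=(\hat{\bm x},\hat f)$ and the approximate pair $X_n:=(\hat{\bm x}_n,\Pi_n^*\hat f_n)$ as mild solutions of $\dot X=\mathcal A X+\mathcal F(t,X)$ and $\dot X_n=\mathcal A X_n+\mathcal F_n(t,X_n)$ on $\mathbb X$, where $\mathcal A$ is the same bounded generator appearing in the proof of Theorem \ref{thm_1} and $\mathcal F_n$ is obtained from $\mathcal F$ by replacing $\mathcal E_{\bm x(t)}$ with $\mathcal E_{\bm x(t)}\Pi_n^*$, so that $P_n:=\Pi_n^*\Pi_n$ is effectively inserted in the $\mathcal H$-component. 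Both problems possess unique mild solutions by the reasoning of Theorem \ref{thm_1}, since $\|\Pi_n^*\|=\|\Pi_n\|\le 1$ leaves the global Lipschitz bound on $\mathcal F_n$ intact with a constant independent of $n$.

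Next I would subtract the two mild (integral) representations. Assuming matched initial data (or, for the natural choice $\hat f_{n,0}=\Pi_n\hat f_0$, absorbing the vanishing term $\|(I-P_n)\hat f_0\|$), one obtains
\begin{equation*}
X(t)-X_n(t)=\int_0^t e^{(t-s)\mathcal A}\bigl[\mathcal F(s,X(s))-\mathcal F_n(s,X_n(s))\bigr]\,ds .
\end{equation*}
Adding and subtracting $\mathcal F_n(s,X(s))$ decomposes the integrand into a \emph{stability} part $\mathcal F_n(s,X(s))-\mathcal F_n(s,X_n(s))$ and a \emph{consistency} part $\eta_n(s):=\mathcal F(s,X(s))-\mathcal F_n(s,X(s))$. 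For the stability part, the same computation as in Theorem \ref{thm_1} gives $\|\mathcal F_n(s,X(s))-\mathcal F_n(s,X_n(s))\|\le D\,\|X(s)-X_n(s)\|$ with $D$ independent of $n$, again using the uniform boundedness of $\mathcal E_{\bm x(t)}$ together with $\|\Pi_n\|\le 1$. The consistency part reduces to the two projection residuals $B_N\mathcal E_{\bm x(s)}(I-P_n)\hat f(s)$ and $\Gamma^{-1}(I-\Pi_n)(B_N\mathcal E_{\bm x(s)})^*P\,\tilde{\bm x}(s)$.

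The crux of the argument, and the step I expect to be the main obstacle, is showing that the consistency error is \emph{uniformly} small, i.e. $\varepsilon_n:=\sup_{s\in[0,T]}\|\eta_n(s)\|\to 0$. The projections converge strongly, $P_n\to I$, because $\{\mathcal H_n\}$ is a nested dense family; but this is only pointwise convergence on fixed vectors, whereas the argument $\hat f(s)$ varies with $s$. The resolution is that $s\mapsto\hat f(s)$ is continuous from the compact interval $[0,T]$ into $\mathcal H$ by Theorem \ref{thm_1}, so its range is compact, and strong convergence of an equibounded sequence of projections is uniform on compact sets. Since $\mathcal E_{\bm x(s)}$ is uniformly bounded and $\tilde{\bm x}(s)=\bm x(s)-\hat{\bm x}(s)$ is bounded on $[0,T]$, both residuals then tend to zero uniformly in $s$, giving $\varepsilon_n\to 0$.

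Finally I would close with Gronwall. Writing $M_T:=\sup_{t\in[0,T]}\|e^{t\mathcal A}\|<\infty$ (finite because $\mathcal A$ is a bounded generator, hence $\{e^{t\mathcal A}\}$ is uniformly bounded on $[0,T]$), the decomposition yields
\begin{equation*}
\|X(t)-X_n(t)\|\le M_T T\,\varepsilon_n+M_T D\int_0^t\|X(s)-X_n(s)\|\,ds ,
\end{equation*}
so Gronwall's inequality produces $\sup_{t\in[0,T]}\|X(t)-X_n(t)\|\le M_T T\,\varepsilon_n\,e^{M_T D T}\to 0$. Reading off the two components of $\mathbb X=\mathbb R^d\times\mathcal H$ then gives $\|\hat{\bm x}-\hat{\bm x}_n\|_{C([0,T];\mathbb R^d)}\to 0$ and $\|\hat f-\Pi_n^*\hat f_n\|_{C([0,T];\mathcal H)}\to 0$, the latter being the intended RKHS-norm reading of the function-error statement.
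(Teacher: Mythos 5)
The paper does not actually prove this theorem---it points to \cite{Bobade2019} and asserts that the error equations have the same form there---so there is no internal proof to compare against line by line. Your proposal supplies the argument the paper omits, and it is the standard one for results of this type: cast the infinite-dimensional estimator and its Galerkin truncation as mild solutions driven by the same bounded generator $\mathcal{A}$, split the difference of the nonlinear terms into a stability part controlled by an $n$-independent Lipschitz constant and a consistency part made of projection residuals, show the consistency error is uniformly small on $[0,T]$, and close with Gronwall. The decomposition, the observation that $\|\Pi_n\|\leq 1$ keeps the Lipschitz constant uniform in $n$, and the compactness device for upgrading strong convergence of the projections to uniform convergence along the continuous, hence precompact, trajectory $s\mapsto\hat f(s)$ are all correct and are the right way to organize the proof.

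The one step you should not wave at is ``$P_n\to I$ strongly because $\{\mathcal{H}_n\}$ is a nested dense family.'' The paper only assumes $\bigcup_n\mathcal{H}_n$ is dense in $\mathcal{H}_\Omega$, not in $\mathcal{H}$, so $\Pi_n$ converges strongly to the orthogonal projection onto $\mathcal{H}_\Omega$, not to the identity. Your consistency residuals $B_N\mathcal{E}_{\bm{x}(s)}(I-P_n)\hat f(s)$ and $\Gamma^{-1}(I-\Pi_n)(B_N\mathcal{E}_{\bm{x}(s)})^*P\tilde{\bm{x}}(s)$ therefore vanish only if $\hat f(s)$ and the kernel sections $\mathcal{K}(\bm{x}(s),\cdot)$ lie in $\mathcal{H}_\Omega$ for every $s\in[0,T]$; the latter requires the measured trajectory to remain in $\Omega$, and the former then follows from the learning law (which builds $\hat f(t)$ out of exactly those kernel sections) provided $\hat f_0\in\mathcal{H}_\Omega$. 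These hypotheses are implicit in the paper's setup, where $\Omega$ is chosen to contain the set visited by the trajectory, but they must be stated for your Gronwall estimate to close: without them the consistency term retains a nonvanishing component involving $(I-\Pi_{\mathcal{H}_\Omega})$ and $\varepsilon_n\not\to 0$. Finally, the norm $C([0,T];\mathbb{R}^d)$ in the second displayed limit of the theorem is a typo in the paper; your reading of it as the $C([0,T];\mathcal{H})$ norm applied to $\hat f-\Pi_n^*\hat f_n$ is the intended one.
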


The proof of the above theorem can be found in \cite{Bobade2019}. As noted earlier, the estimator equations considered in \cite{Bobade2019} are different from the ones considered in this paper. However, the error equations for $\hat{\bm{x}} - \hat{\bm{x}}_n$ and $\hat{f} - \hat{f}_n$ still have the same form as in \cite{Bobade2019}, and the proof of the above theorem will remain the same. 

%

\section{RKHS Adaptive Estimator Implementation}
\label{sec_imp}
\begin{figure}[htb!]
\centering
\includegraphics[scale = 0.4]{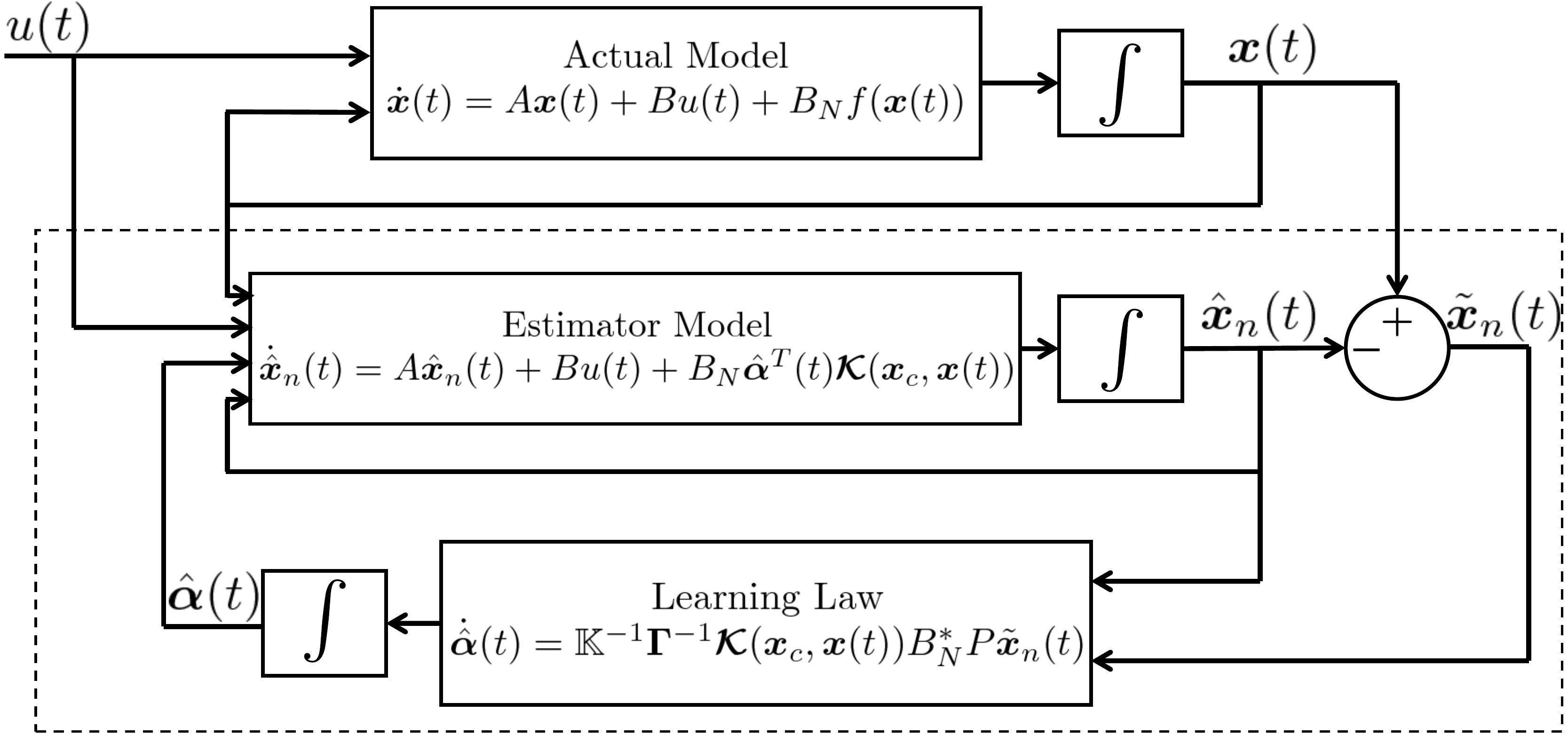}
\caption{Adaptive Parameter Estimator Block Diagram.}
\label{fig_bd}
\end{figure}

The previous section discussed the theory behind estimators that evolve in an RKHS. This section presents the algorithm for the implementation of the theory. Figure \ref{fig_bd} shows the block diagram of the adaptive estimator. The actual model shown in the figure corresponds to the true system excited by the input $\mathscr{u}$, and we assume that we can measure all the states $\bm{x}(t)$ of this true system. The estimator and learning law blocks in the diagram are what we implement on the computer. Let us first take a look at the estimator model. The operator $\Pi_n^*$ in the estimator model is the adjoint of the orthogonal projection/approximation operator $\Pi_n$. It is equivalent to the inclusion map that maps an element of $\mathcal{H}_n$ space to the same element in the $\mathcal{H}_{\Omega}$ space. Thus, the term $\mathcal{E}_{x(t)} \Pi_n^* \hat{f}_n(t)$ in the estimator model is the same as $\mathcal{E}_{\bm{x}(t)} \hat{f}_n(t) = \hat{f}_n(t,\bm{x}(t))$.

Now, let us take a look at the learning law given in Equation \ref{eq_llapprox2}. It is a derivative of a function, and we cannot directly implement it on a computer. To convert it to a form that is solvable using numerical methods, we take the inner product of the learning law with $\mathcal{K}(\bm{x}_i,\cdot)$. Before proceeding with this step, let us recall that the finite-dimensional function estimate $\hat{f}_n (t,\cdot)$ can be expressed as $\hat{f}_n (t,\cdot) = \sum_{j=1}^n \hat{\alpha}_j(t) \mathcal{K}(\bm{x}_j, \cdot) = \hat{\bm{\alpha}}^T(t) \bm{\mathcal{K}}(\bm{x}_c, \cdot) $. Thus, for $i = 1,\ldots,n$,

\begin{align*}
    \left( \mathcal{K}(\bm{x}_i,\cdot),\dot{\hat{f}}_n (t) \right)_{\mathcal{H}_X} 
    &= \left( \mathcal{K}(\bm{x}_i,\cdot), \Gamma^{-1} \left( B_{N} \mathcal{E}_{\bm{x}(t)} \Pi_n^* \right)^* P \tilde{\bm{x}}_n(t) \right)_{\mathcal{H}_X}
    \\
    \implies
    \sum_{j=1}^n \mathcal{K}(\bm{x}_i,\bm{x}_j) \dot{\hat{\alpha}}_j(t) 
    &= \Gamma^{-1} \left( B_{N} \mathcal{E}_{\bm{x}(t)} \mathcal{K}(\bm{x}_i,\cdot), P \tilde{\bm{x}}_n(t) \right)_{\mathcal{H}_X}.
\end{align*}
Thus, if $\hat{\bm{\alpha}}(t):= \{ \hat{\alpha}_1(t),\ldots,\hat{\alpha}_n(t) \}^T$, its time derivative is given by the expression
\begin{align*}
    \dot{\hat{\bm{\alpha}}}(t)= \mathbb{K}^{-1} \bm{\Gamma}^{-1} \bm{\mathcal{K}}(\bm{x}_{c},\bm{x}(t)) B_{N}^* P \tilde{\bm{x}}_n(t),
\end{align*}
where $\mathbb{K}$ is the symmetric positive definite Grammian matrix whose $ij^{th}$ element is defined as $\mathbb{K}_{ij} := \mathcal{K}(\bm{x}_i,\bm{x}_j)$, $\bm{\Gamma}:= \Gamma \mathbb{I}_{n}$ is the gain matrix, and $\bm{\mathcal{K}}(\bm{x}_{c},\bm{x}(t)):= \{ \mathcal{K}(\bm{x}_1,\bm{x}(t)),\ldots,\mathcal{K}(\bm{x}_n,\bm{x}(t)) \}^T$.

The above equation gives us an expression for the rate at which the coefficients of the kernels change with time. Therefore, the implementation of the adaptive estimator amounts to integration of the equations

\begin{align}
    \dot{\hat{\bm{x}}}_n(t) &= A \hat{\bm{x}}_n(t) + B \mathscr{u}(t) + B_{N} \hat{\bm{\alpha}}^T(t) \bm{\mathcal{K}}(\bm{x}_c, \bm{x}(t)),
    \label{eq_eqimp1}
    \\
    \dot{\hat{\bm{\alpha}}}(t) &= \mathbb{K}^{-1} \bm{\Gamma}^{-1} \bm{\mathcal{K}}(\bm{x}_{c},\bm{x}(t)) B_{N}^* P \tilde{\bm{x}}_n(t).
    \label{eq_eqimp2}
\end{align}

From the discussion in Subsection \ref{subsec_pe}, it is clear that the persistence of excitation is sufficient to ensure parameter convergence. However, it is hard and sometimes impossible to check if a given space is persistently exciting. The following theorem from \cite{Kurdila1995} gives us a sufficient condition for the persistence of excitation that is easy to verify. However, this theorem is only applicable to cases where radial basis functions over $\mathbb{R}^d$ generate the RKHS. Furthermore, we can only use this sufficient condition to check the persistence of excitation of finite-dimensional spaces. However, since all implementation is in the finite-dimensional spaces, the following theorem provides us a powerful tool to verify the convergence of parameters in practical applications.

\begin{theorem}
Let $\epsilon < \frac{1}{2} \min_{i \neq j} \| \bm{x}_i - \bm{x}_j \|$, where $\bm{x}_i$ and $\bm{x}_j$ are the kernel centers $\{\bm{x}_1, \ldots, \bm{x}_n \}$. For every $t_0 \geq 0$ and $\delta > 0$, define
\begin{align*}
    I_i = \{ t \in [t_0, t_0 + \delta] : \| \bm{x}(t) - \bm{x}_i \| \leq \epsilon \}.
\end{align*}
If there exists a $\delta$ such that the measure of $I_i$ is bounded below by a positive constant that is independent of $t_0$ and the kernel center $\bm{x}_i$, and if the measure of $[t_0,t_0+\delta]$ less than or equal to $\delta$, then the space $\mathcal{H}_n$ is persistently exciting.
\label{thm_sc}
\end{theorem}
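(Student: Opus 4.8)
The plan is to verify the second persistence-of-excitation condition (Definition~PE.~\ref{def_PE2}) directly for the finite-dimensional space $\mathcal{H}_n$, taking the window $\Delta = \delta$ and, since the dwell-time bound is assumed uniform in $t_0$, the threshold $T_0 = 0$. First I would parametrize a test function: every unit-norm $g \in \mathcal{H}_n$ is $g = \sum_{j=1}^n \alpha_j \mathcal{K}(\bm{x}_j, \cdot) = \bm{\alpha}^T \bm{\mathcal{K}}(\bm{x}_c, \cdot)$ with $\|g\|^2_{\mathcal{H}_n} = \bm{\alpha}^T \mathbb{K} \bm{\alpha} = 1$. Using the reproducing property, $\mathcal{E}_{\bm{x}(\tau)} g = g(\bm{x}(\tau)) = \bm{\alpha}^T \bm{\mathcal{K}}(\bm{x}_c, \bm{x}(\tau))$ and $(\mathcal{E}^*_{\bm{x}(\tau)} \mathcal{E}_{\bm{x}(\tau)} g, g)_{\mathcal{H}_n} = |g(\bm{x}(\tau))|^2$. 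Hence the PE integral equals $\bm{\alpha}^T M(t_0) \bm{\alpha}$, where $M(t_0) := \int_{t_0}^{t_0+\delta} \bm{\mathcal{K}}(\bm{x}_c,\bm{x}(\tau)) \bm{\mathcal{K}}(\bm{x}_c,\bm{x}(\tau))^T d\tau$, and the whole claim reduces to producing a constant $\gamma > 0$, independent of $t_0$, with $M(t_0) \succeq \gamma \mathbb{K}$ in the semidefinite order.

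The second step is to localize the integral using the separation hypothesis. Since $\epsilon < \tfrac12 \min_{i\neq j}\|\bm{x}_i - \bm{x}_j\|$, the balls $B(\bm{x}_i,\epsilon)$ are pairwise disjoint, so the sets $I_i$ are disjoint and $M(t_0) \succeq \sum_i \int_{I_i} \bm{\mathcal{K}} \bm{\mathcal{K}}^T d\tau$. On each $I_i$ I would compare $g(\bm{x}(\tau))$ to its value at the center through the RKHS estimate $|g(\bm{x}(\tau)) - g(\bm{x}_i)| \le \|g\|_{\mathcal{H}_n}\,\|\mathcal{K}(\bm{x}(\tau),\cdot)-\mathcal{K}(\bm{x}_i,\cdot)\|_{\mathcal{H}}$. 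For a radial kernel $\mathcal{K}(\bm{x},\bm{y}) = \Phi(\bm{x}-\bm{y})$ this right-hand side is $\sqrt{2(\Phi(0)-\Phi(\bm{x}(\tau)-\bm{x}_i))} \le \omega(\epsilon)$, a modulus of continuity that tends to $0$ with $\epsilon$. Thus for $\tau \in I_i$ the integrand is within $\omega(\epsilon)$ of $|g(\bm{x}_i)|$, which converts the time integral into an estimate in terms of the discrete samples $g(\bm{x}_i) = (\mathbb{K}\bm{\alpha})_i$.

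Finally I would aggregate. The dwell-time hypothesis gives $|I_i| \ge \eta > 0$ uniformly in $t_0$ and $i$, so $\int_{I_i}|g(\bm{x}(\tau))|^2 d\tau \ge \eta\,(|g(\bm{x}_i)| - \omega(\epsilon))_+^2$. The discrete energy is controlled from below because $\sum_i |g(\bm{x}_i)|^2 = \|\mathbb{K}\bm{\alpha}\|^2 = \bm{\alpha}^T \mathbb{K}^2\bm{\alpha} \ge \lambda_{\min}(\mathbb{K})\,\bm{\alpha}^T\mathbb{K}\bm{\alpha} = \lambda_{\min}(\mathbb{K}) > 0$. Splitting the indices into those with $|g(\bm{x}_i)| \ge 2\omega(\epsilon)$ and the rest, the ``small'' indices contribute at most $4n\,\omega(\epsilon)^2$ to this sum, so once $\epsilon$ is small enough that $4n\,\omega(\epsilon)^2 < \tfrac12\lambda_{\min}(\mathbb{K})$ the ``large'' indices retain at least $\tfrac12\lambda_{\min}(\mathbb{K})$ of the energy and each contributes at least $\tfrac{\eta}{4}|g(\bm{x}_i)|^2$. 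Summing yields the uniform bound $\gamma = \tfrac{\eta}{8}\lambda_{\min}(\mathbb{K})$, establishing Definition~PE.~\ref{def_PE2}; equicontinuity of the finite family $\{g(\bm{x}(\cdot))\}$ then upgrades this to Definition~PE.~\ref{def_PE1} via the earlier equivalence theorem.

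I expect the main obstacle to be the interplay between $\epsilon$ and the Grammian: the localization error $\omega(\epsilon)$ must be dominated by $\lambda_{\min}(\mathbb{K})/n$, whereas shrinking $\epsilon$ simultaneously makes the dwell-time hypothesis $|I_i|\ge\eta$ harder for the trajectory to satisfy. Making this quantitative, and uniform over all unit-norm $g$ rather than a single fixed one, is the crux; I would lean on Narcowich--Ward type lower bounds for $\lambda_{\min}(\mathbb{K})$ in terms of the separation distance to keep the threshold on $\epsilon$ explicit, and on the radiality of $\Phi$ to obtain a single modulus $\omega$ valid at every center.
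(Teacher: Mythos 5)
The paper does not actually prove Theorem~\ref{thm_sc}; it only states it and defers the argument to the reference \cite{Kurdila1995}. Measured against that source rather than an in-paper proof, your reconstruction is essentially the standard argument and is sound in its main steps: the reduction of Definition~PE.~\ref{def_PE2} to the matrix inequality $M(t_0)\succeq\gamma\,\mathbb{K}$ is exact, the disjointness of the dwell sets $I_i$ follows correctly from $\epsilon<\tfrac12\min_{i\neq j}\|\bm{x}_i-\bm{x}_j\|$, the comparison $|g(\bm{x}(\tau))-g(\bm{x}_i)|\leq\|g\|_{\mathcal H}\,\|\mathcal K(\bm{x}(\tau),\cdot)-\mathcal K(\bm{x}_i,\cdot)\|_{\mathcal H}\leq\omega(\epsilon)$ is the right way to pass from the trajectory to the centers, and the identity $\sum_i|g(\bm{x}_i)|^2=\bm{\alpha}^T\mathbb{K}^2\bm{\alpha}\geq\lambda_{\min}(\mathbb{K})$ together with the index split gives a legitimate uniform $\gamma$.

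The one substantive issue is the point you yourself flag: your argument needs the additional quantitative hypothesis $4n\,\omega(\epsilon)^2<\tfrac12\lambda_{\min}(\mathbb{K})$, whereas the theorem as printed here assumes only the separation bound on $\epsilon$. As stated, the theorem therefore is not what your proof delivers; without some smallness condition tying $\epsilon$ to the kernel's modulus of continuity and to $\lambda_{\min}(\mathbb{K})$, nothing prevents a unit-norm $g$ from being nearly zero throughout one of the $\epsilon$-balls where the trajectory dwells. This is a defect of the informal statement rather than of your proof --- the precise version in \cite{Kurdila1995} carries exactly such a condition, resolved via Narcowich--Ward lower bounds on $\lambda_{\min}(\mathbb{K})$ in terms of the separation distance, as you anticipate. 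Two minor remarks: the hypothesis that the measure of $[t_0,t_0+\delta]$ is at most $\delta$ is vacuous and does no work; and since the paper's convergence theorem is phrased in terms of Definition~PE.~\ref{def_PE1}, your closing step (upgrading PE.~\ref{def_PE2} to PE.~\ref{def_PE1} via equicontinuity) does require the trajectory to be uniformly continuous on $[T_0,\infty)$, which holds here because the state and its derivative remain bounded, but should be said explicitly.
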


We have to note that the persistence of excitation of $H_n$ does not imply the convergence of error to $0$ since the function $f$ belongs to the infinite-dimensional space $\mathcal{H}$. However, it can be shown that the limit of the error norm $\lim_{t \to \infty} \| f - \hat{f}_n(t) \|_{\mathcal{H}_\Omega}$ is bounded above by a positive constant. We refer the reader to $\cite{Kurdila1995}$ for a more detailed discussion on the convergence of parameters in finite-dimensional spaces.

The following algorithm gives a step by step procedure for implementing the RKHS adaptive estimator.
%
\begin{algorithm}[H]
\KwIn{$\bm{x}(t)$,$w^+(\bm{x}_0)$}
\KwOut{$\hat{f}_n (T,\cdot)$}
Choose the RKHS $\mathcal{H}_{X}$ and the corresponding reproducing kernel $\mathcal{K}(\cdot,\cdot)$.
\\
Choose kernel centers $\bm{x}_i$, for $i=1,\ldots,N$ uniformly distributed on $w^+(\bm{x}_0)$,
\linebreak
if $X$ is equal to the state space, choose kernels centers on $w^+(\bm{x}_0)$,
\linebreak
if $X$ is a proper subset of the state space, choose kernel centers on the projection of $w^+(\bm{x}_0)$ on to the space $X$.
\\
Run the adaptive estimator until the parameters converge. \linebreak
Integrate 
\begin{align*}
    \dot{\hat{\bm{x}}}_n(t) &= A \hat{\bm{x}}_n(t) + B \mathscr{u}(t) + B_{N} \hat{\bm{\alpha}}^T(t) \bm{\mathcal{K}}(\bm{x}_c, \bm{x}(t)),
    \\
    \dot{\hat{\bm{\alpha}}}(t) &= \mathbb{K}^{-1} \bm{\Gamma}^{-1} \bm{\mathcal{K}}(\bm{x}_{c},\bm{x}(t)) B_{N}^* P \tilde{\bm{x}}_n(t)
\end{align*}
over the interval $[0,T]$.
\\
Define $\hat{f}_n(T,\cdot) := \hat{\bm{\alpha}}^T(T) \bm{\mathcal{K}}(\bm{x}_c, \cdot)$.
\label{algo_rkhsAdapEst}
\caption{RKHS adaptive estimator implementation}
\end{algorithm}
\section{Numerical Simulation Results}
\label{sec_results}
In this section, we consider the prototypical piezoelectric oscillator example modeled in Section \ref{sec_model} to study the effectiveness of an RKHS adaptive estimator and make qualitative studies of convergence. As emphasized above, the finite-dimensional Equations \ref{eq_llapprox1} and \ref{eq_llapprox2} are stated for the general analysis when the unknown function $f = f(x_1,x_2)$. In this section, we study qualitative convergence properties in the specific case that $f = f(x_1)$. For this specific example, it is straightforward to show that the finite-dimensional equations have the form
\begin{align*}
    \dot{\hat{\bm{x}}}_n(t) &= A \hat{\bm{x}}_n(t) + B \mathscr{u}(t) + B_{N} \mathcal{E}_{x_1(t)} \Pi_n^* \hat{f}_n(t),
    \\
    \dot{\hat{f}}_n(t) &= \Gamma^{-1} \left( B_{N} \mathcal{E}_{x_1(t)} \Pi_n^* \right)^* P \tilde{\bm{x}}_n(t).
\end{align*}
These equations evolve in $\mathbb{R}^d \times \mathcal{H}_n$, where $\mathcal{H}_n = span \{ \mathcal{K}(x_{1,i}, \cdot ) \}$ is defined in terms of the kernel on $\mathbb{R}$ $\mathcal{K}: \mathbb{R} \times \mathbb{R} \to \mathbb{R}$ and displacement samples $\bm{x}_c = \{x_{1,i} \}_{i=1}^n = \Omega_n \subseteq \Omega \subseteq \mathbb{R}$. With this interpretation and the definition $\mathcal{K}(\bm{x}_c, \bm{x}(t)) := \{ \mathcal{K}(x_{1,1},x_1(t)), \ldots, \mathcal{K}(x_{1,n},x_1(t)) \}^T$, the specific governing equations still have the form shown in Equations \ref{eq_eqimp1}, \ref{eq_eqimp2}, and Algorithm \ref{algo_rkhsAdapEst} applies. Tables \ref{table_simprop1} and \ref{table_simprop2} list the numerical values of the parameters used to build the actual model shown in Equation \ref{eq_piezomodel}. We used the shape function corresponding to the first cantilever beam mode while modeling the system to get Equations \ref{eq_SMEOM1} and \ref{eq_SMEOM2}. Table \ref{table_simprop2} also shows the input used to drive the actual system.

\begin{table}[htb!]
\centering
    \footnotesize
    \caption{Parameters of the actual system used in simulations}
    \begin{subtable}{0.4\linewidth}
    \centering
    \subcaption{Piezoceramic parameters}
    \begin{tabular}{c|c|c}
    \toprule
    & Parameter & Value \\
    \hline 
    \hline
    \multirow{12}{*}{Piezoceramic (PIC 151)} & $\rho_p$ & $7790$ (kg/m$^3$)\\
    & $h_p$ & 0.001 (m)\\
    & $a$ & 0\\
    & $b$ & $l$\\
    & $d_{31,0}$ & -2.1e-10 (m/V) \\
    & $d_{31,1}$ & -36.9746 (m/V) \\
    & $d_{31,2}$ & -0.03596 (m/V) \\
    & $E_{p0}$ & 0.667e+11 (Pa) \\
    & $E_{p1}$ & -3.328e-12 (Pa) \\
    & $E_{p2}$ & -1.4e+18 (Pa) \\
    & $\epsilon_{33}$ & 2.12e-8 (F/m)\\
    \bottomrule
    \end{tabular}
    \label{table_simprop1}
    \end{subtable}
    \hspace{1cm} 
    \begin{subtable}{0.4\linewidth}
    \centering
    \subcaption{Other parameters}
    \begin{tabular}{c|c|c}
    \toprule
    & Parameter & Value \\
    \hline 
    \hline
    \multirow{6}{*}{Substrate} & Material & St 37 \\
    & $\rho_b$ & 7800 (kg/m$^3$) \\
    & $C_b$ & 2.089e+11 (Pa) \\
    & $l$ & 0.4 (m)\\
    & $b$ & 0.025 (m)\\
    & $h$ & 0.003 (m)\\
    \hline
    \multirow{2}{*}{Damping} & $\alpha$ & 0.1\\
    & $\beta$ & 1e-3\\
    \hline
    \multirow{3}{*}{Input} & $\mathscr{u}$ & $A \sin(\omega t)$ \\
    & Amplitude $A$ & 1 (m/s$^2$)\\
    & Frequency $\omega$ & 22.5 (rad/s)\\
    \bottomrule
    \end{tabular}
    \label{table_simprop2}
    \end{subtable}
    \label{table_simprop}
\end{table}

\begin{figure}[htb!]
\centering
\includegraphics[scale = 0.55]{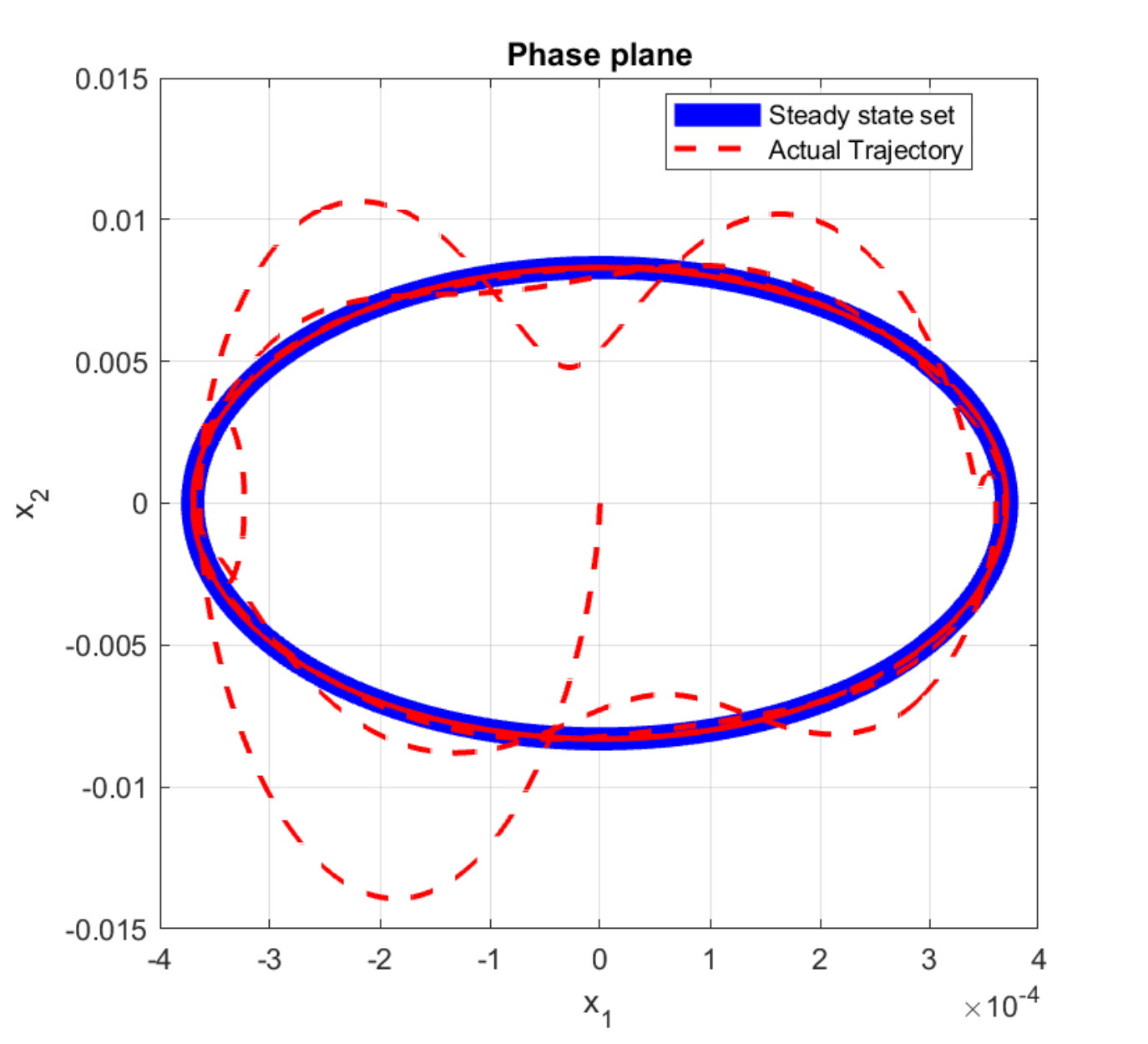}
\caption{The trajectory in the phase plane starting at $[0,0]^T$ eventually converges to the steady-state set.}
\label{fig_fullphaseplot}
\end{figure}

Figure \ref{fig_fullphaseplot} shows the steady-state response of the actual piezoelectric system. This figure gives us an estimate of maximum and minimum displacement. Under the assumption that the unknown nonlinear term is a function of displacement only, it is clear that the set $\Omega \subseteq \mathbb{R}$. For this problem, the set $\Omega$ is the closed interval from minimum displacement to the maximum displacement. For the adaptive estimator, the reproducing kernel implemented in the simulation was selected to be the popular exponential function

\begin{align*}
    \mathcal{K}(x,y) = e^{-\frac{||x-y||^2}{2 \sigma^2}}.
\end{align*}
Thus, $\mathcal{H}_\Omega$ is the set defined as 
\begin{align*}
    \mathcal{H}_\Omega := \overline{\{ \mathcal{K}(x,\cdot) = e^{-\frac{||x-\cdot||^2}{2 \sigma^2}} | x \in \Omega \subseteq \mathbb{R} \}},
\end{align*}
where $\sigma$ is the standard deviation of the radial basis function. For the simulations, we used $\sigma = 1e-9$. A total of $24$ equidistant points were chosen in the interval $\Omega = [-0.00037018, 0.00037026]$ and the kernel functions were centered at these points. It is clear from the state-state trajectory in Figure \ref{fig_fullphaseplot} that the hypotheses for the sufficient condition given in Theorem \ref{thm_sc} are satisfied.

\begin{figure}
     \centering
     \begin{minipage}[c]{0.45\textwidth}
         \centering
         \includegraphics[scale = 0.55]{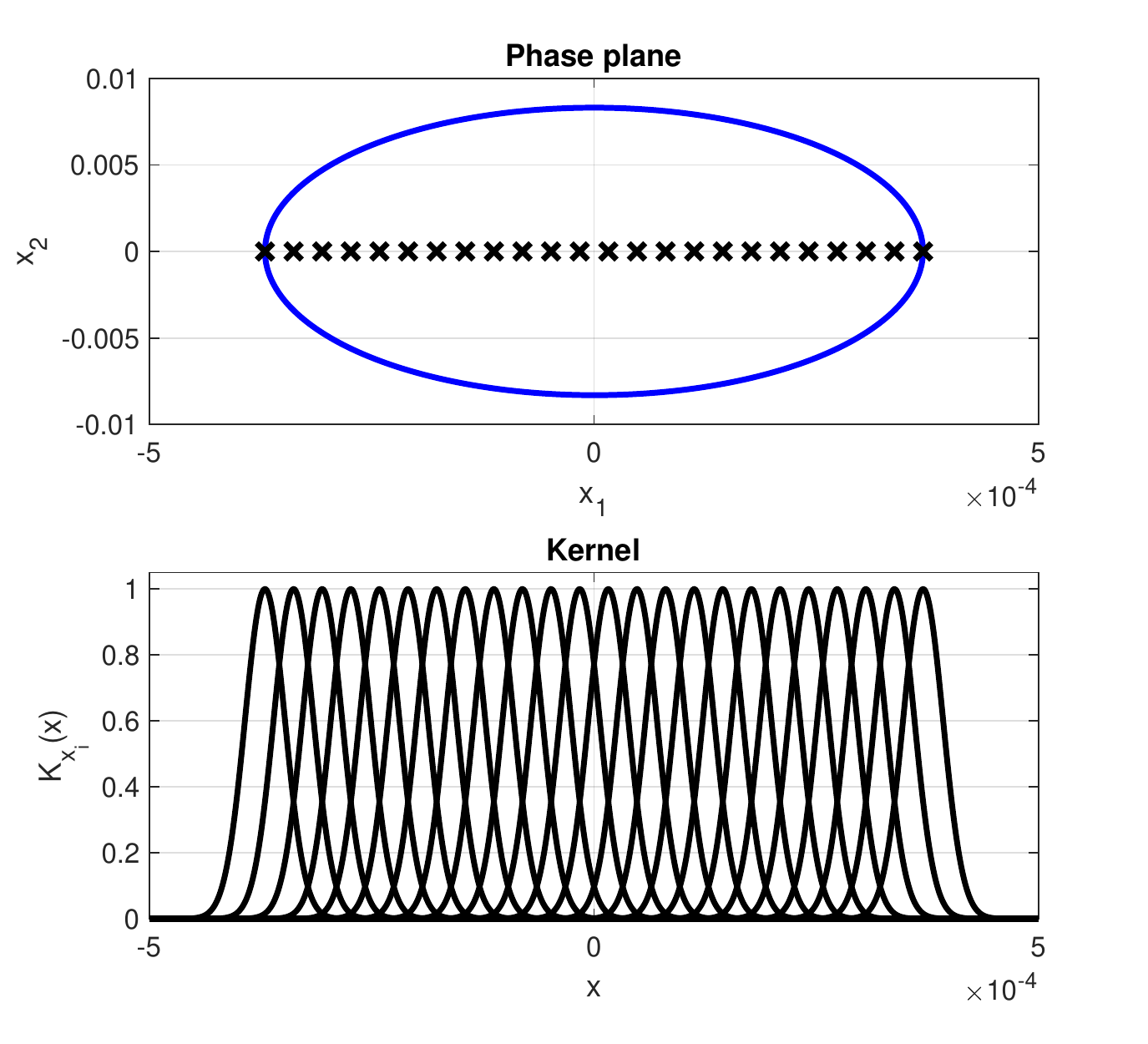}
         \caption{Radial basis functions centered at equidistant points in $\Omega$.}
         \label{fig_kerneldef}
     \end{minipage}
     \begin{minipage}[c]{0.45\textwidth}
         \centering
         \includegraphics[scale = 0.55]{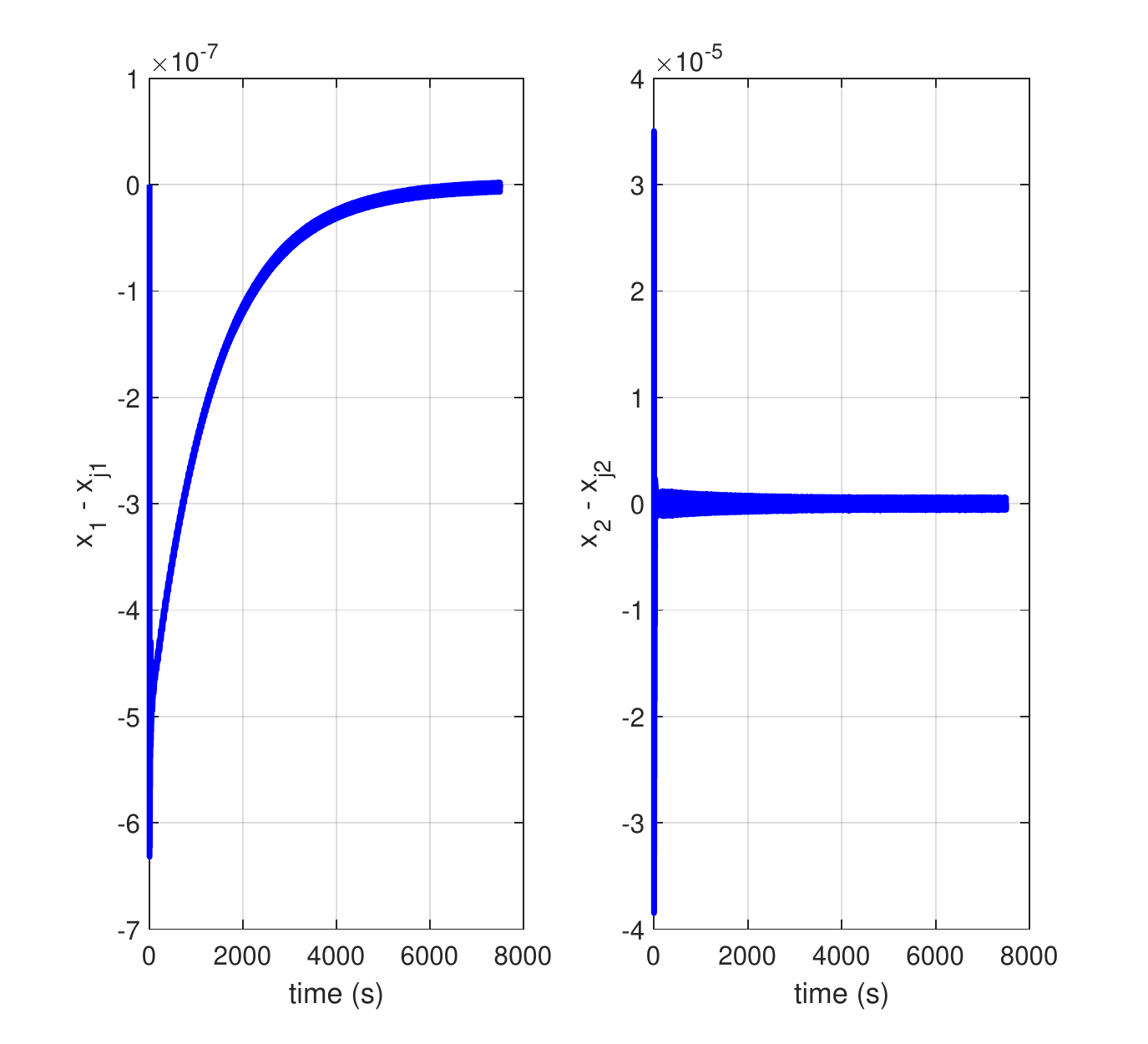}
         \caption{Evolution of state error with time.}
         \label{fig_stateerr}
     \end{minipage}
\end{figure}

\begin{figure}
     \centering
     \begin{subfigure}[c]{0.45\textwidth}
         \centering
         \includegraphics[scale = 0.5]{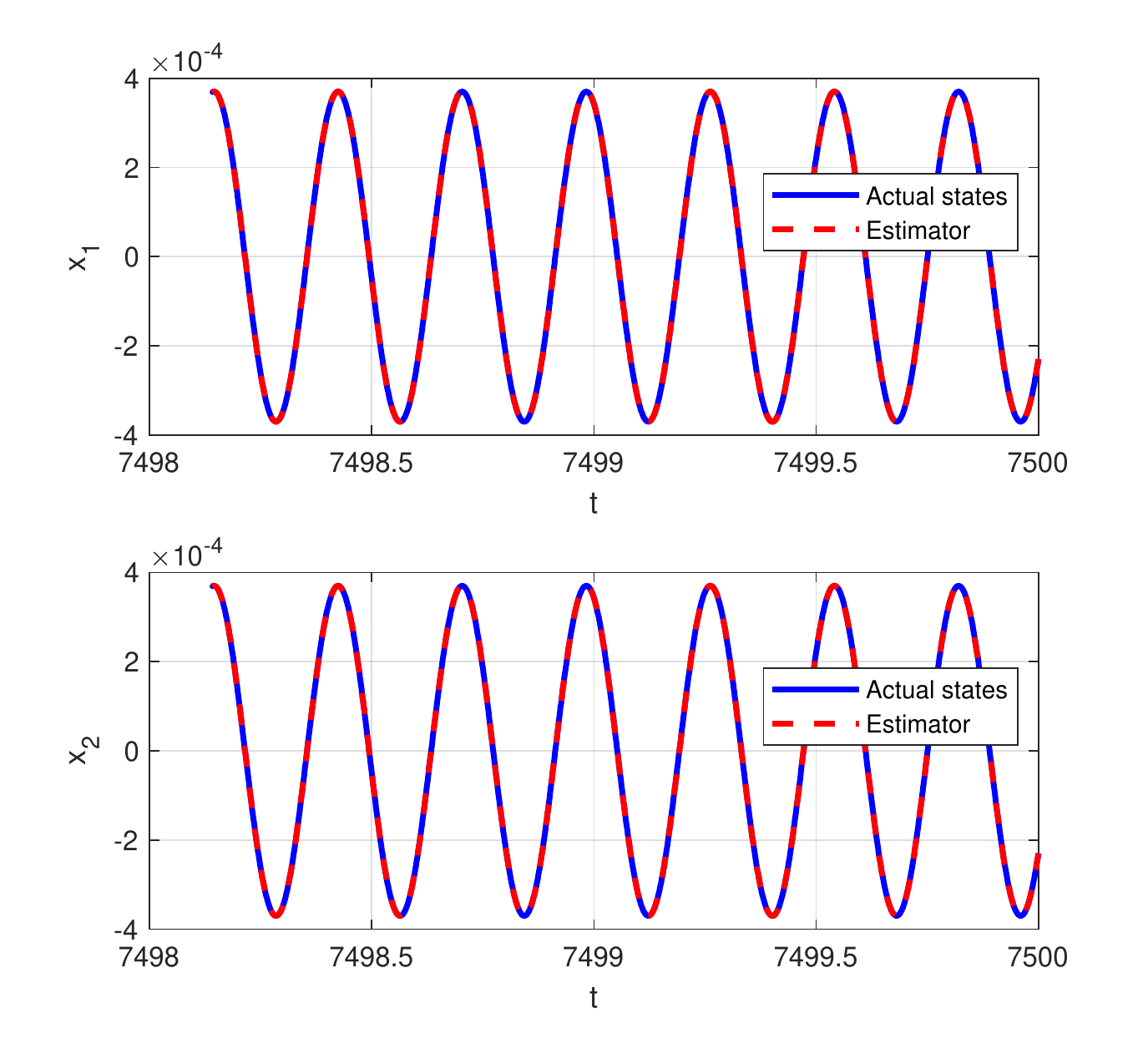}
        \caption{Actual states and state estimate - final 500 timesteps}
         \label{fig_steadystate}
     \end{subfigure}
     \begin{subfigure}[c]{0.45\textwidth}
         \centering
         \includegraphics[scale = 0.5]{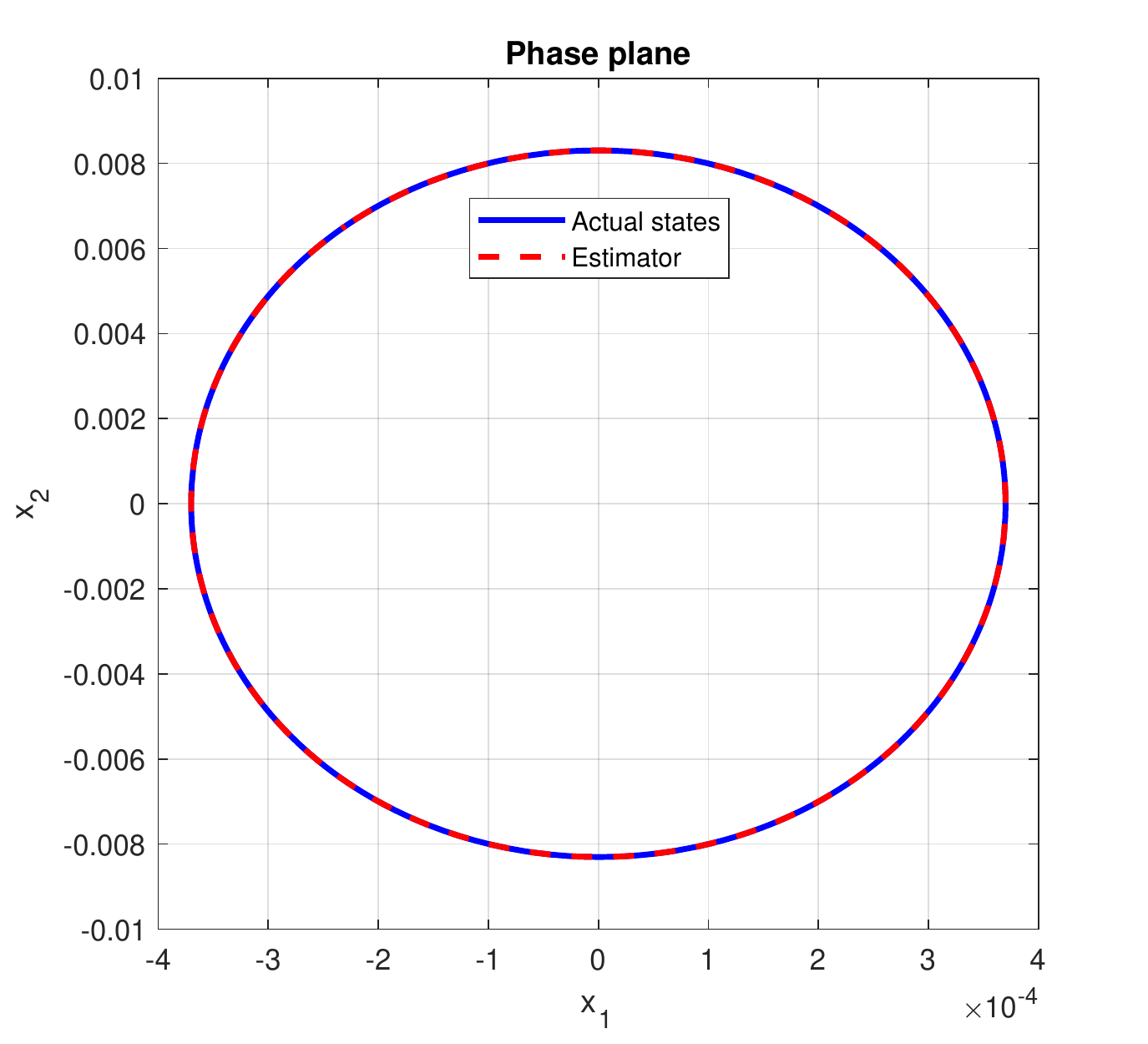}
         \caption{Actual states and state estimates in phase plane after convergence of state error to 0.}
         \label{fig_steadystatepp}
     \end{subfigure}
     \caption{State estimate plots}
\end{figure}

Figure \ref{fig_stateerr} shows the time history of the state errors. As expected, the state errors eventually converge to zero. Figure \ref{fig_steadystate} shows the final 500-time-steps of the actual states and the estimated states. Figure \ref{fig_steadystatepp} shows the corresponding phase plot. It is clear from these plots that the estimator tracks the actual states with almost no error.

\begin{figure}
     \centering
     \begin{subfigure}[c]{0.45\textwidth}
         \centering
         \includegraphics[scale = 0.5]{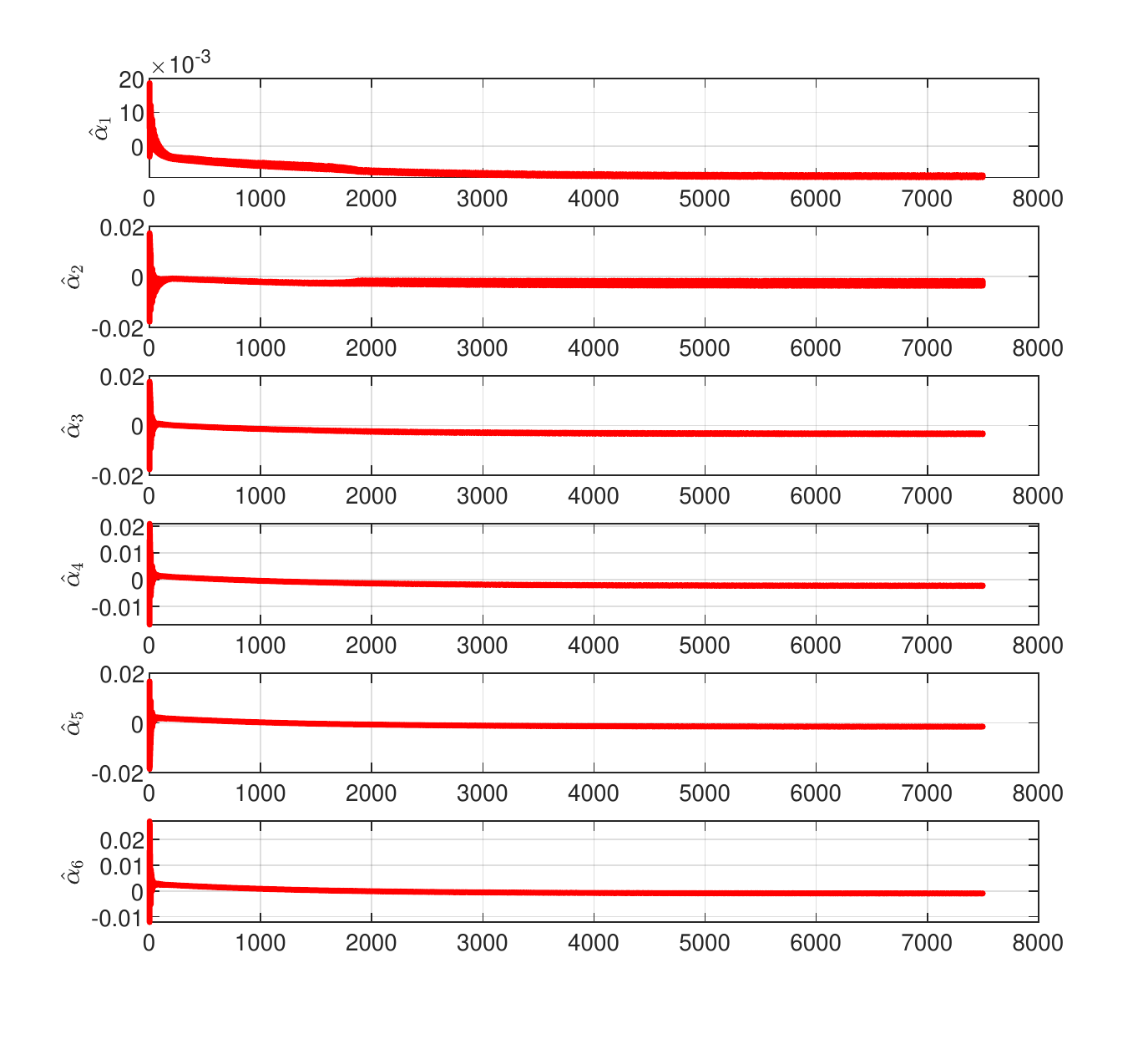}
         \caption{Evolution of $\hat{\alpha}_{1} - \hat{\alpha}_{6}$}
         \label{fig_param1}
     \end{subfigure}
     \begin{subfigure}[c]{0.45\textwidth}
         \centering
         \includegraphics[scale = 0.5]{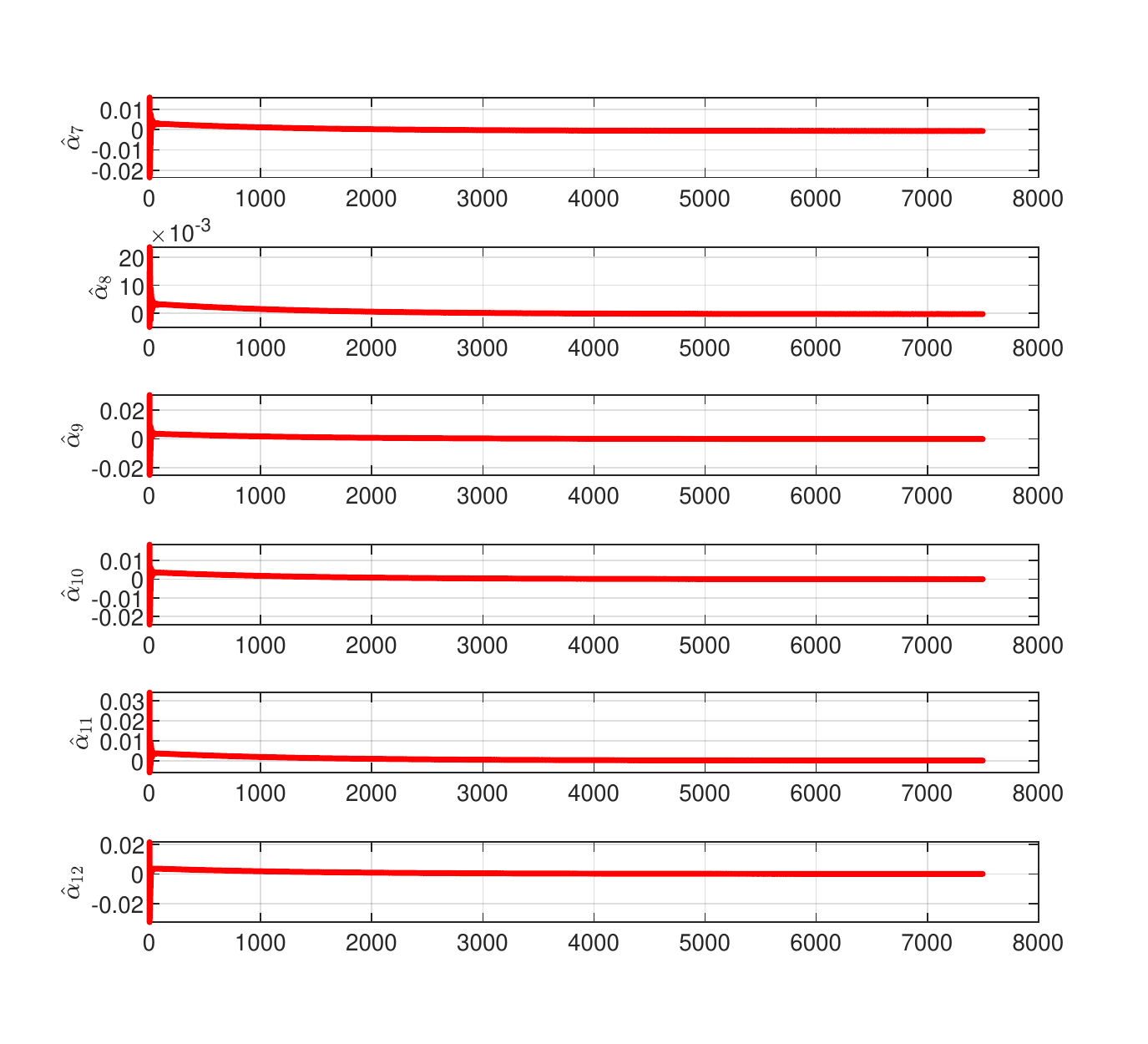}
         \caption{Evolution of $\hat{\alpha}_{7} - \hat{\alpha}_{12}$}
         \label{fig_param2}
     \end{subfigure}
     \caption{Evolution of the parameter estimates $\hat{\alpha}_{1} - \hat{\alpha}_{12}$ with time.}
     \label{fig_param1to12}
\end{figure}

\begin{figure}
     \centering
     \begin{subfigure}[c]{0.45\textwidth}
         \centering
         \includegraphics[scale = 0.5]{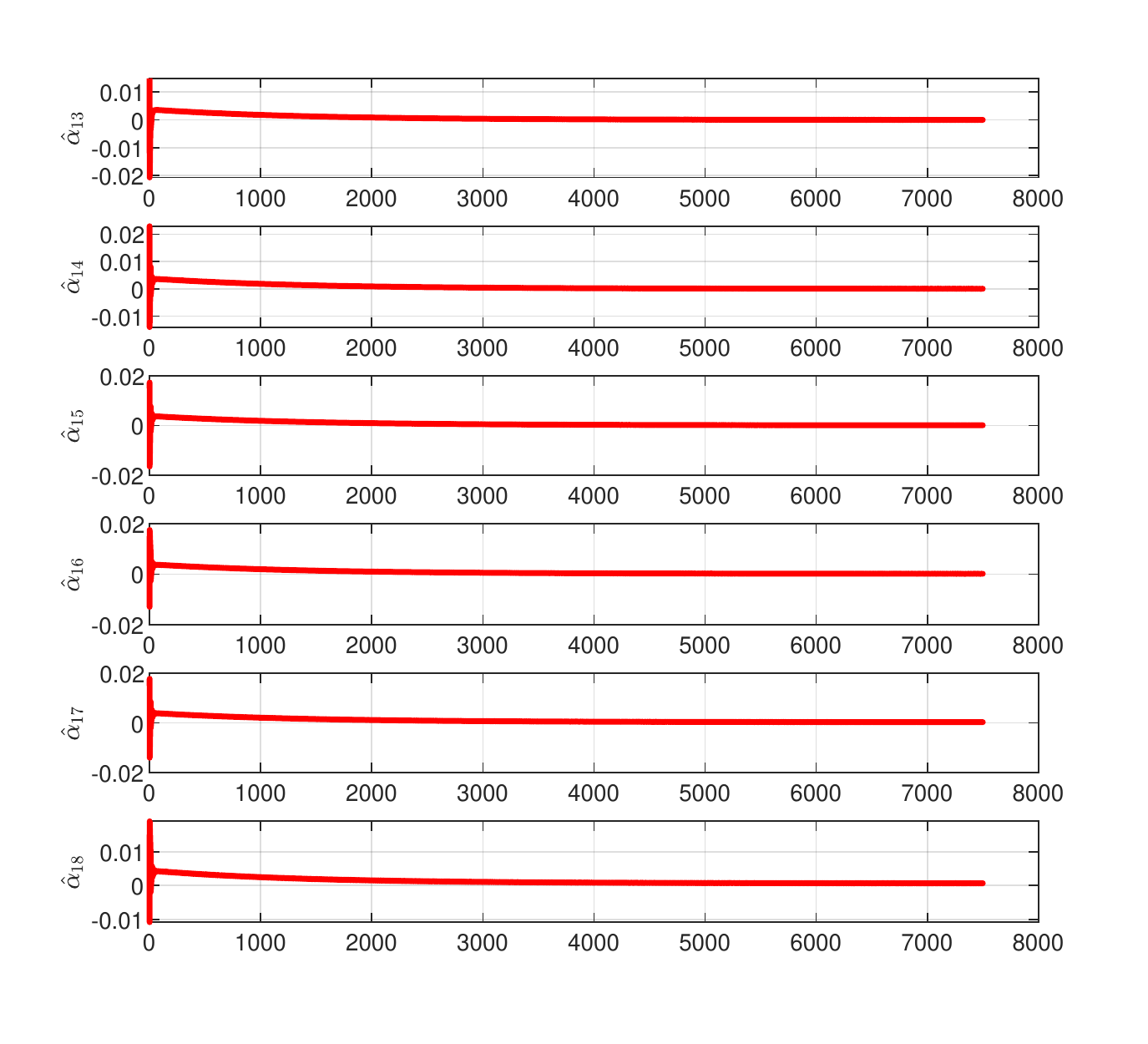}
         \caption{Evolution of $\hat{\alpha}_{13} - \hat{\alpha}_{18}$}
         \label{fig_param3}
     \end{subfigure}
     \begin{subfigure}[c]{0.45\textwidth}
         \centering
         \includegraphics[scale = 0.5]{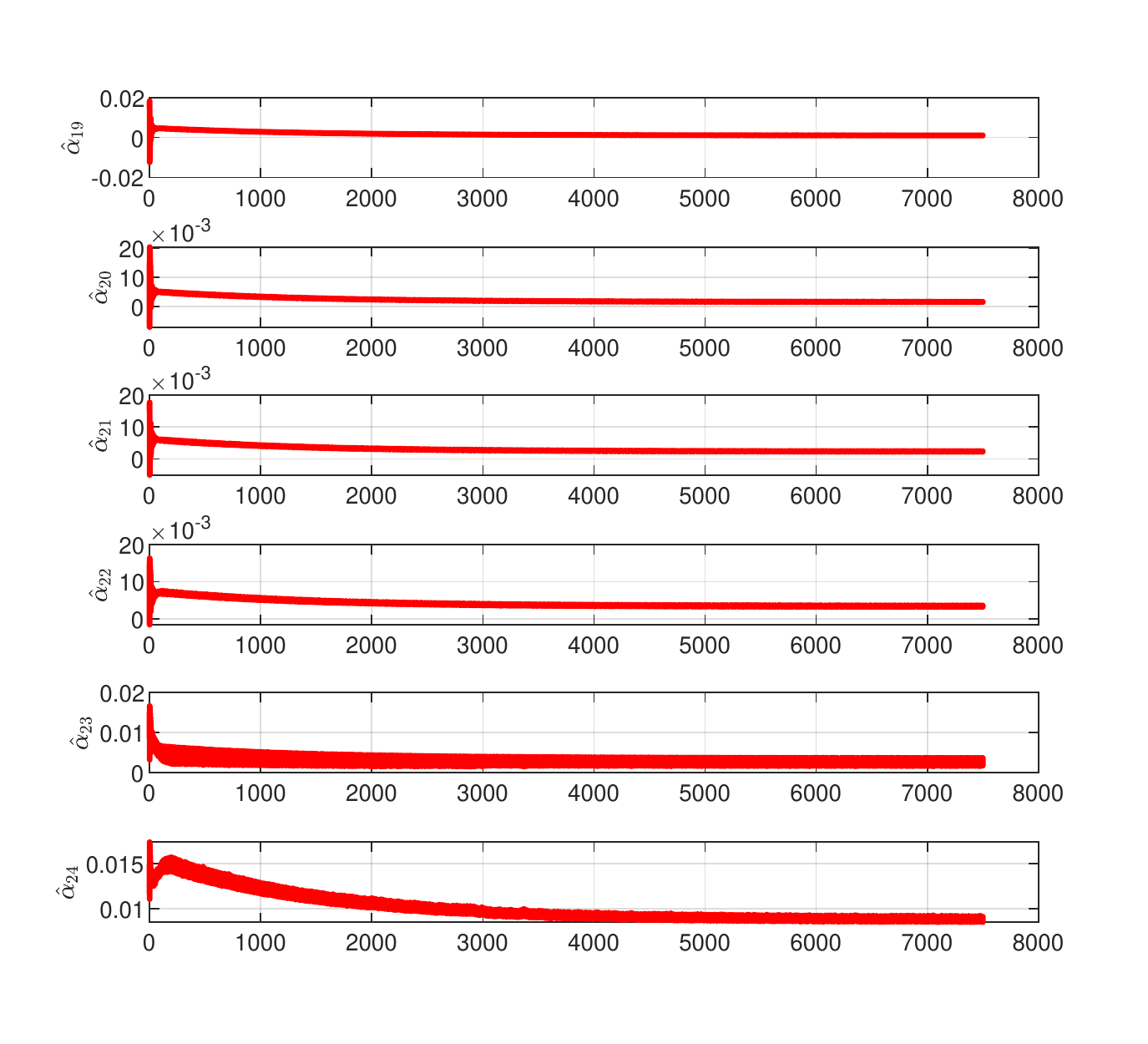}
         \caption{Evolution of $\hat{\alpha}_{19} - \hat{\alpha}_{24}$}
         \label{fig_param4}
     \end{subfigure}
     \caption{Evolution of the parameter estimates $\hat{\alpha}_{13} - \hat{\alpha}_{24}$ with time.}
     \label{fig_param13to24}
\end{figure}

Figures \ref{fig_param1}, \ref{fig_param2}, \ref{fig_param3} and \ref{fig_param4} show the evolution of the parameters. It is clear from the figures that the estimated parameters converge to a constant as time $t \to \infty$.

\begin{figure}
     \centering
     \begin{subfigure}[c]{0.45\textwidth}
         \centering
         \includegraphics[scale = 0.5]{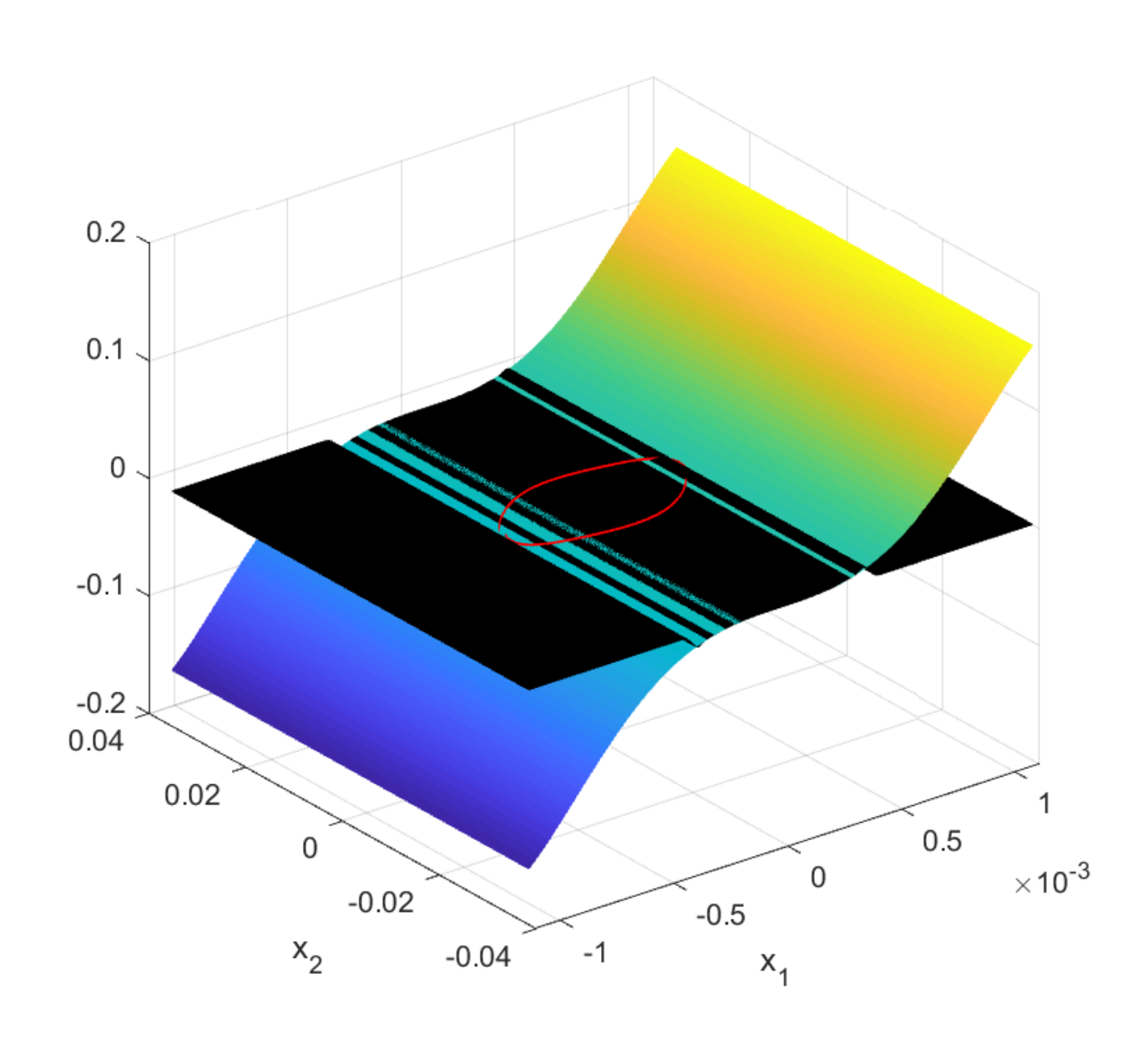}
         \caption{Function estimate and Actual function plot}
         \label{fig_3Dfuncs_tvsest}
     \end{subfigure}
     \begin{subfigure}[c]{0.45\textwidth}
         \centering
         \includegraphics[scale = 0.5]{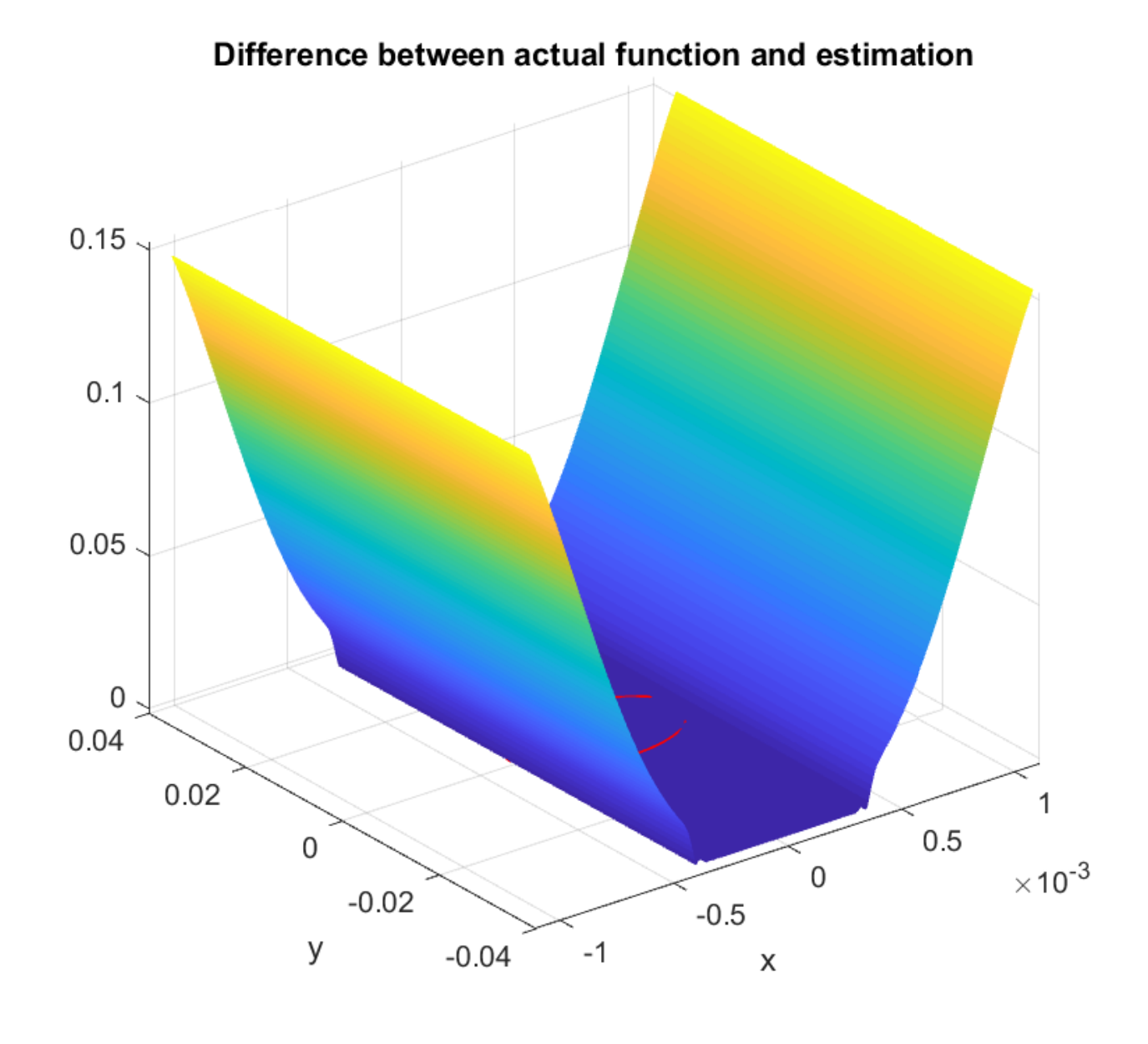}
         \caption{Error between function estimate and actual function}
         \label{fig_3Dfuncs_err}
     \end{subfigure}
     \caption{Function estimate on state-space plots.}
     \label{fig_3Dfuncs}
\end{figure}

The plots of the actual function $f$ and estimated function $\hat{f}_i$ evaluated on the state space $\mathbb{R}^2$ can be seen in Figure \ref{fig_3Dfuncs_tvsest}. The figure shows that the estimated function does not vary along the $x_2$ direction, and this is because of the assumption that the set $\Omega \subseteq \mathbb{R}$. Figure \ref{fig_2Dfuncs_tvsest} shows that the actual and estimated functions agree on $\Omega$. Recall that convergence of the function error is guaranteed in the norm on $\mathcal{H}_{\Omega}$ essentially. This amounts to a guarantee of the pointwise error over the set $\Omega$. No guarantee is made for values outside $\Omega$. See \cite{jia2020a,jia2020b,Kurdila2019PE} for more details on the convergence.

\begin{figure}
\centering
\begin{subfigure}[c]{0.45\textwidth}
\centering
\includegraphics[scale = 0.5]{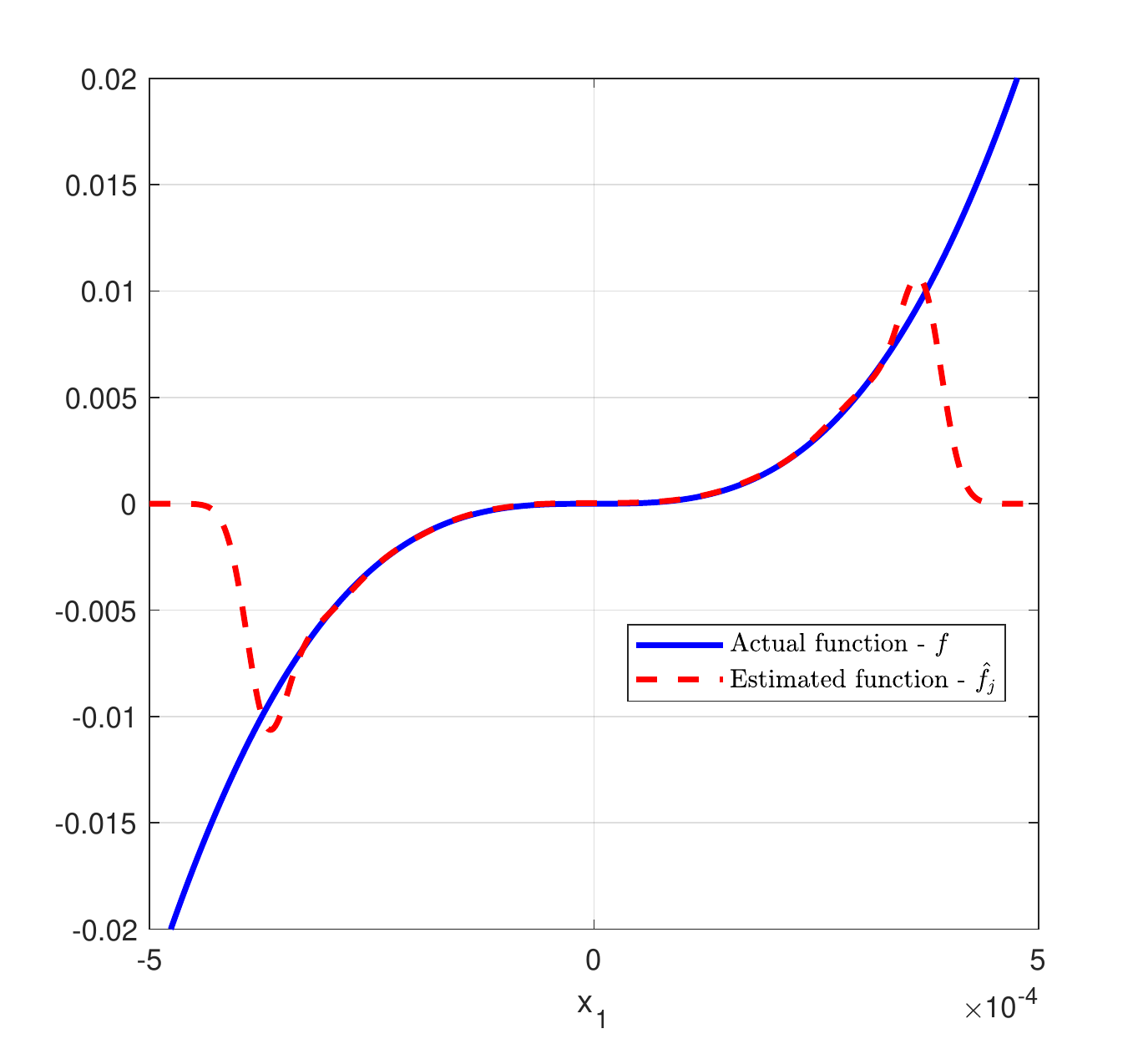}
\caption{Actual function and function estimate.}
\label{fig_2Dfuncs_tvsest}
\end{subfigure}
\begin{subfigure}[c]{0.45\textwidth}
\centering
\includegraphics[scale = 0.5]{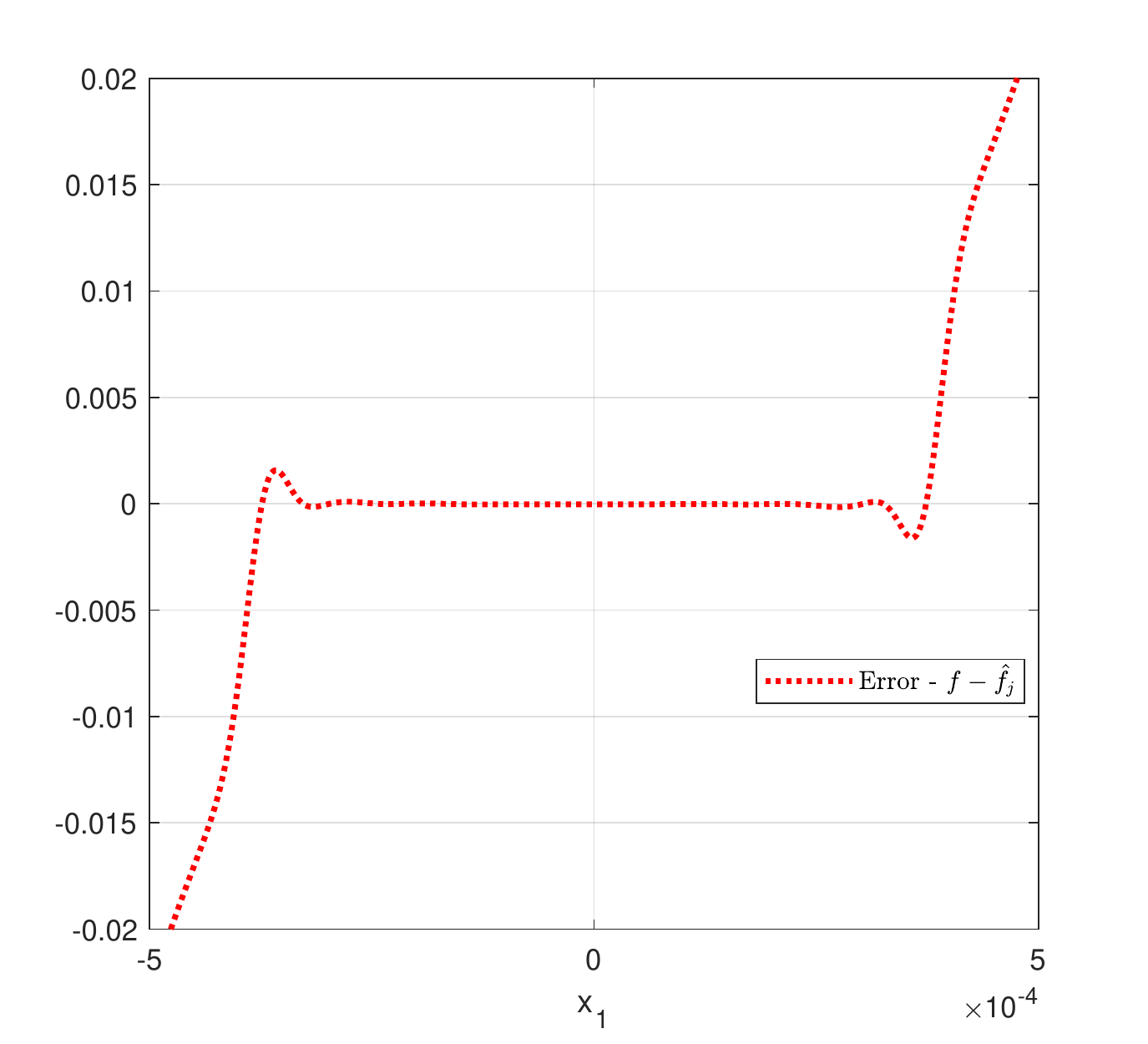}
\caption{Error between actual function and function estimate.}
\label{fig_2Derrfuncs_tvsest}
\end{subfigure}
\caption{Function estimate on $\mathbb{R}$ plots.}
\label{fig_2Dfuncs}
\end{figure}




\section{Conclusion}
This paper has introduced a novel approach to model and estimate uncertain nonlinear piezoelectric oscillators, and the effectiveness of the approach has been validated by testing it on a nonlinear piezoelectric bimorph beam. The nonlinear function used in the numerical study depended only on the displacement, but much of the theory applies to more complex uncertainties. It would be of interest to study the effectiveness of such estimators on more complex oscillators, ones for which unknown nonlinearities depend on all the states. The algorithm discussed in this paper follows a general framework and can be adapted easily to model many other nonlinearities. Robustness of the current algorithm and its effectiveness in the presence of noise would be of great interest and remains to be explored and would complement the findings in the current study.

\appendix
\section{Piezoelectric Oscillator - Governing Equations}
\label{app_EOM}
In this section, we go over the detailed steps involved in the derivation of the infinite-dimensional governing equation of the piezoelectric oscillator shown in Figure \ref{fig_pbeam}. The kinetic energy and the electric potential are given by Equation \ref{eq_kinEn} and Equation \ref{eq_elecPot}, respectively. Using Hamilton's principle, we get the variational identity

\begin{align}
    \delta \int_{t_0}^{t_1} (T - \mathcal{V}_\mathcal{H}) dt 
    &= 
    \delta \int_{t_0}^{t_1} \left \{ 
    \left[ 
    \frac{1}{2} \mathscr{m} \int_0^l (\dot{w} + \dot{\mathscr{z}})^2 dx
    \right]
    \right.
    \notag
    \\
    & \qquad -
    \left[ 
    \frac{1}{2} C_b I_b \int_0^l (w'')^2 dx 
    + 2 a_{(0,2)} \int_{a}^{b} (w'')^2 dx + 2 a_{(2,4)} \int_{a}^{b} (w'')^4 dx \right.
    \notag
    \\
    & \qquad \qquad \qquad \left. \left.
    + 2 b_{(1,1)} \left[ \int_{a}^{b} w'' dx \right] E_z + 2 b_{(3,1)} \left[ \int_{a}^{b} (w'')^3 dx \right] E_z - 2 b_{(0,2)} E_z^2
    \right]
    \right \} dt = 0.
    \label{eq_var_stat_ham_prin}
\end{align}

The above variational statement can be rewritten as

\begin{align}
    \delta \int_{t_0}^{t_1} L dt &= \int_{t_0}^{t_1} \left \{
    \int_0^l (m \dot{w} \delta \dot{w} + m \dot{\mathscr{z}} \delta \dot{w}) dx 
    - \int_0^l C_b I_b w'' \delta w'' dx \right.
    - 4a_{(0,2)} \int_{a}^{b} w'' \delta w'' dx 
    \notag
    \\
    & \hspace{1.5cm} - 8 a_{(2,4)} \int_{a}^{b} (w'')^3 \delta w'' dx 
    - 2b_{(1,1)} \left( \int_{a}^{b} (\delta w'') dx \right) E_z
    - 2b_{(1,1)} \left( \int_{a}^{b} w'' dx \right) \delta E_z
    \notag
    \\
    & \hspace{1.5cm} \left.
    - 6b_{(3,1)} \left( \int_{a}^{b} (w'')^2 \delta w'' dx \right) E_z
    - 2b_{(3,1)} \left( \int_{a}^{b} (w'')^3 dx \right) \delta E_z
    + 4b_{(0,2)} E_z \delta E_z
    \right \} dt = 0
    \label{eq_vs}
\end{align}

After integrating the above statement by parts, we get

\begin{align*}
    &\int_{t_0}^{t_1} \Bigg \{
    - \int_0^l m \ddot{w} \delta w dx 
    - \int_0^l m \ddot{\mathscr{z}} \delta w dx
    \\
    & \hspace{2cm} - \left. C_b I_b w'' \delta w' \right|_{0}^{l}
    + \left. C_b I_b w''' \delta w \right|_{0}^{l}
    - \int_0^l C_b I_b w'''' \delta w dx
    \\
    & \hspace{2cm} - \left. 4a_{(0,2)} \chi_{[a,b]} w'' \delta w' \right|_{0}^{l}
    + \left. 4a_{(0,2)} \chi_{[a,b]} w''' \delta w \right|_{0}^{l}
    - 4 a_{(0,2)} \int_{0}^{l} \chi_{[a,b]} w'''' \delta w dx
    \\
    & \hspace{2cm} - \left. 8a_{(2,4)} \chi_{[a,b]} (w'')^3 \delta w' \right|_{0}^{l}
    + \left. 8a_{(2,4)} (\chi_{[a,b]} (w'')^3)' \delta w \right|_{0}^{l}
    - 8a_{(2,4)} \int_{0}^{l} (\chi_{[a,b]} (w'')^3)'' \delta w dx
    \\
    & \hspace{2cm} - 2 b_{(1,1)} \left( \left. \chi_{[a,b]} \delta w' \right|_0^l \right) E_z + 2 b_{(1,1)} \left( \left. \chi_{[a,b]}' \delta w \right|_0^l \right) E_z
    - 2 b_{(1,1)} \left( \int_0^l \chi_{[a,b]}'' \delta w dx \right) E_z
    \\
    & \hspace{2cm} - 2 b_{(1,1)} \left( \int_0^l \chi_{[a,b]} w'' dx \right) \delta E_z
    \\
    & \hspace{2cm} \left. - 6 b_{(3,1)} \chi_{[a,b]} (w'')^2 \delta w' \right|_{0}^{l} E_z
    + \left. 6 b_{(3,1)} (\chi_{[a,b]} (w'')^2)' \delta w \right|_{0}^{l} E_z
    - 6 b_{(3,1)} \left( \int_{0}^{l} (\chi_{[a,b]} (w'')^2)'' \delta w dx \right) E_z
    \\
    & \hspace{2cm}
    - 2 b_{(3,1)} \left( \int_{a}^{b} (w'')^3 dx \right) \delta E_z
    \\
    & \hspace{2cm}
    + 4 b_{(0,2)} E_z \delta E_z
    \Bigg \} dt = 0.
\end{align*}

Note, in the above statement, the term $\chi_{[a,b]}$ is called the characteristic function of $[a,b]$ and is defined as
\begin{align}
    \chi_{[a,b]}(x) :=
    \left \{
    \begin{array}{cc}
        1 & \text{if } x \in [a,b], \\
        0 & \text{if } x \notin [a,b].
    \end{array}
    \right.
    \label{eq_chaeq}
\end{align}

Rearranging the terms in the above variational statement results in the expression

\begin{align*}
    &\int_{t_0}^{t_1} \Bigg \{
    - \int_0^l \bigg[ m \ddot{w} + m \ddot{\mathscr{z}} 
    + C_b I_b w'''' 
    + 4 a_{(0,2)} \left( \chi_{[a,b]} w'' \right)'' 
    + 8 a_{(2,4)} (\chi_{[a,b]} (w'')^3)'' 
    \\
    & \hspace{4.5cm} 
    + 2 b_{(1,1)} \chi_{[a,b]}'' E_z 
    + 6 b_{(3,1)} (\chi_{[a,b]} (w'')^2 )'' E_z \bigg] \delta w dx
    \\
    & \hspace{1.5cm} 
    - \left[ 2b_{(1,1)} \left( \int_0^l \chi_{[a,b]} w'' dx \right)
    + 2b_{(3,1)} \left( \int_{0}^{l} \chi_{[a,b]} (w'')^3 dx \right)
    - 4b_{(0,2)} E_z
    \right] \delta E_z
    \\
    & \hspace{1.5cm} 
    - \left \{
    C_b I_b w'' + 4a_{(0,2)} \chi_{[a,b]} w'' + 8a_{(2,4)} \chi_{[a,b]} (w'')^3 + 2 b_{(1,1)} \chi_{[a,b]} E_z + 6 b_{(3,1)} \chi_{[a,b]} (w'')^2
    \right \} \delta w' \big|_0^l
    \\
    & \hspace{1.5cm} 
    + \left \{
    C_b I_b w''' + 4a_{(0,2)} \left(\chi_{[a,b]} w''\right)' + 8a_{(2,4)} \left( \chi_{[a,b]} (w'')^3 \right)' \right.
    \\
    & \hspace{4.5cm} 
    \left.
    + 2 b_{(1,1)} \chi_{[a,b]}' E_z + 6 b_{(3,1)} \left( \chi_{[a,b]} (w'')^2 \right)'
    \right \} \delta w \bigg|_0^l
    \Bigg \} dt = 0.
\end{align*}

Since the variation of $w$ and $E_z$ are arbitrary, we can conclude that the equations of motion of the nonlinear piezoelectric cantilevered bimorph have the form shown in Equations \ref{eq_eom1} and \ref{eq_eom2}.

\section{Single Mode Approximation of the Piezoelectric Oscillator Equations}
\label{app_SMEOM}
As mentioned earlier, the effects of nonlinearity in piezoelectric oscillators are most noticeable near the natural frequency of the system. Hence, single-mode models are sufficient to model the dynamics as long as the range of input excitation is restricted to a band around the first natural frequency. Let us introduce the single-mode approximation $w(x,t) = \psi(x) u(t)$. To make calculations easier, let us introduce this approximation into the variational statement shown in Equation \ref{eq_vs}. Further, note that

\begin{align*}
    \int_{t_0}^{t_1} \int_0^l  m \dot{w} \delta \dot{w} dx dt = - \int_{t_0}^{t_1} \int_0^l m \ddot{w} \delta w dx dt,  
    & & 
    \int_{t_0}^{t_1} \int_0^l  m \dot{\mathscr{z}} \delta \dot{w} dx dt = - \int_{t_0}^{t_1} \int_0^l m \ddot{\mathscr{z}} \delta w dx dt.
\end{align*}

After introducing the approximation for $w(x,t)$ into the variational statement in Equation \ref{eq_vs} and using the equations shown above, we get the variational statement

\begin{align*}
    & 0 = \delta \int_{t_0}^{t_1} \Bigg \{
    -m \left( \int_0^l \psi^2(x) dx \right) \ddot{u} \delta u
    -m \left( \int_0^l \psi(x) dx \right) \ddot{\mathscr{z}} \delta u
    \\
    & \hspace{2cm}
    -C_b I_b \left( \int_0^l \left( \psi''(x) \right)^2 dx \right) u \delta u
    -4a_{(0,2)} \left( \int_0^l \chi_{[a,b]} \left( \psi''(x) \right)^2 \right) u \delta u
    \\
    & \hspace{2cm}
    -8a_{(2,4)} \left( \int_0^l \chi_{[a,b]} \left( \psi''(x) \right)^4 dx \right) u^3 \delta u
    -2b_{(1,1)} E_z \left( \int_0^l \chi_{[a,b]} \psi''(x) dx \right) \delta u
    \\
    & \hspace{2cm}
    -2b_{(1,1)} \left( \int_0^l \chi_{[a,b]} \psi''(x) dx \right) u \delta E_z
    -6b_{(3,1)} \left( \int_0^l \chi_{[a,b]} \left( \psi''(x) \right)^3 \right) u^2 E_z \delta u
    \\
    & \hspace{2cm}
    -2b_{(3,1)} \left( \int_0^l \chi_{[a,b]} \left( \psi''(x) \right)^3 dx \right) u^3 \delta E_z
    + 4 b_{(0,2)} E_z \delta E_z
    \Bigg \} dt.
\end{align*}

Rearranging the terms in the above variational statement, we get

\begin{align*}
    0 = \int_{t_0}^{t_1} \Bigg \{ & - \bigg[
    m \left( \int_0^l \psi^2(x) dx \right) \ddot{u}
    + m \left( \int_0^l \psi(x) dx \right) \ddot{\mathscr{z}}
    + C_b I_b \left( \int_0^l \left( \psi''(x) \right)^2 dx \right) u 
    \\
    & \hspace{3cm}
    + 4a_{(0,2)} \left( \int_0^l \chi_{[a,b]} \left( \psi''(x) \right)^2 \right) u 
    + 8a_{(2,4)} \left( \int_0^l \chi_{[a,b]} \left( \psi''(x) \right)^4 dx \right) u^3 
    \\
    & \hspace{3cm}
    + 2b_{(1,1)} E_z \left( \int_0^l \chi_{[a,b]} \psi''(x) dx \right) 
    + 6b_{(3,1)} \left( \int_0^l \chi_{[a,b]} \left( \psi''(x) \right)^3 \right) u^2 E_z
    \bigg] \delta u
    \\
    & - \bigg[
    2b_{(1,1)} \left( \int_0^l \chi_{[a,b]} \psi''(x) dx \right) u
    + 2b_{(3,1)} \left( \int_0^l \chi_{[a,b]} \left( \psi''(x) \right)^3 \right) u^3
    - 4b_{(0,2)} E_z
    \bigg] \delta E_z
    \Bigg \} dt
\end{align*}

Thus, the approximated equation of motion are

\begin{align*}
    & \underbrace{m \left( \int_0^l \psi^2(x) dx \right)}_{M} \ddot{u}
    + \underbrace{m \left( \int_0^l \psi(x) dx \right)}_{P} \ddot{\mathscr{z}}
    + \underbrace{C_b I_b \left( \int_0^l \left( \psi''(x) \right)^2 dx \right)}_{K_b} u 
    \\
    & \hspace{2cm}
    + \underbrace{4a_{(0,2)} \left( \int_0^l \chi_{[a,b]} \left( \psi''(x) \right)^2 \right)}_{K_p} u 
    + \underbrace{8a_{(2,4)} \left( \int_0^l \chi_{[a,b]} \left( \psi''(x) \right)^4 dx \right)}_{K_N} u^3 
    \\
    & \hspace{2cm}
    + \underbrace{2b_{(1,1)} \left( \int_0^l \chi_{[a,b]} \psi''(x) dx \right)}_{B} E_z 
    + \underbrace{6b_{(3,1)} \left( \int_0^l \chi_{[a,b]} \left( \psi''(x) \right)^3 \right)}_{Q_{N}} u^2 E_z = 0,
    \\
    & \underbrace{2b_{(1,1)} \left( \int_0^l \chi_{[a,b]} \psi''(x) dx \right)}_{B} u(t)
    + \underbrace{2b_{(3,1)} \left( \int_0^l \chi_{[a,b]} \left( \psi''(x) \right)^3 \right)}_{B_N} u^3(t)
    = \underbrace{4b_{(0,2)}}_{\mathcal{C}} E_z.
\end{align*}
These calculations generate the approximated Equations \ref{eq_SMEOM1} and \ref{eq_SMEOM2}.

\bibliographystyle{unsrt}  
\bibliography{RKHS}

\end{document}